\documentclass[11pt, reqno, dvipsnames]{amsart}

\usepackage[english]{babel}
\usepackage[utf8]{inputenc}

\setlength{\textwidth}{6.5in}
\setlength{\oddsidemargin}{-0.1in}
\setlength{\evensidemargin}{-0.1in}

\usepackage[pdftex]{graphicx}
\usepackage{amsmath}
\usepackage{amssymb}
\usepackage{amsfonts}
\usepackage{amsthm}
\usepackage[all,cmtip]{xy}
\usepackage{tikz}
\usepackage{tikz-cd}
\usepackage[new]{old-arrows}
\usepackage{blindtext}
\usepackage{accents}
\usepackage{scalerel}
\usepackage{stmaryrd}
\usepackage{mathrsfs}
\usepackage{dsfont}
\usepackage{relsize}
\usepackage{url}
\usepackage{contour}
\usepackage{ulem}
\usepackage{footmisc}
\usepackage{enumerate}
\usepackage[T1]{fontenc}
\usepackage{fancyhdr}

\usetikzlibrary{decorations.markings}
\tikzset{double line with arrow/.style args={#1,#2}{decorate,decoration={markings, mark=at position 0 with {\coordinate (ta-base-1) at (0,1pt);
\coordinate (ta-base-2) at (0,-1pt);},
mark=at position 1 with {\draw[#1] (ta-base-1) -- (0,1pt);
\draw[#2] (ta-base-2) -- (0,-1pt);
}}}}

\usepackage{mathtools}

\usetikzlibrary{trees}

\tikzstyle{level 1}=[level distance=2.4cm, sibling distance=6.5cm]
\tikzstyle{level 2}=[level distance=2.4cm, sibling distance=2.5cm]
\tikzstyle{level 3}=[level distance=3cm, sibling distance=0.8cm]


\usepackage[linktocpage=true]{hyperref}
\hypersetup{
    colorlinks = true,
    linkcolor = {ForestGreen},
    citecolor = {RoyalBlue},
    urlcolor = {Black}
}

\newcommand{\listref}[1]{{\color{ForestGreen}(}\ref{#1}{\color{ForestGreen})}}


\newcommand{\Zz}{\mathbb{Z}}
\newcommand{\Ff}{\mathbb{F}}

\newcommand{\Qq}{\mathbb{Q}}
\newcommand{\Hh}{\mathbb{H}}

\newcommand{\Gg}{\mathbb{G}}

\newcommand{\solidif}{\scalebox{0.5}{$\blacksquare$}}

\newcommand{\cl}[1]{\mathcal{#1}}
\newcommand{\fr}[1]{\mathfrak{#1}}

\contourlength{0.8pt}

\DeclareMathOperator*{\colim}{colim}
\DeclareMathOperator{\ett}{\acute{e}t}
\DeclareMathOperator{\pet}{pro\acute{e}t}

\DeclareMathOperator{\Hom}{Hom}

\DeclareMathOperator{\Pic}{Pic}
\DeclareMathOperator{\Spec}{Spec}

\DeclareMathOperator{\Fil}{Fil}

\DeclareMathOperator{\cond}{cond}

\DeclareMathOperator{\CondAb}{CondAb}

\DeclareMathOperator{\disc}{disc}

\DeclareMathOperator{\crys}{crys}

\DeclareMathOperator{\syn}{syn}
\DeclareMathOperator{\fib}{fib}

\DeclareMathOperator{\Spf}{Spf}

\DeclareMathOperator{\QSyn}{QSyn}
\DeclareMathOperator{\QRSPerfd}{QRSPerfd}

\DeclareMathOperator{\length}{length}
\DeclareMathOperator{\tor}{tors}

\usepackage{relsize}
\usepackage[bbgreekl]{mathbbol}
\DeclareSymbolFontAlphabet{\mathbb}{AMSb} 
\DeclareSymbolFontAlphabet{\mathbbl}{bbold}
\newcommand{\Prism}{{\mathlarger{\mathbbl{\Delta}}}}


\numberwithin{equation}{section}
\newtheorem{theorem}{Theorem}
\numberwithin{theorem}{section}

\newtheorem{lemma}[theorem]{Lemma}
\newtheorem{cor}[theorem]{Corollary}
\newtheorem{prop}[theorem]{Proposition}
\newtheorem{conj}[theorem]{Conjecture}

\theoremstyle{definition}

\newtheorem{convnot}[theorem]{Notation and conventions}
\newtheorem{notation}[theorem]{Notation}

\newtheorem{rmk}[theorem]{Remark}

\newtheorem*{Acknowledgments}{Acknowledgments}

\theoremstyle{remark}


%
 \AtBeginDocument{%
 \def\MR#1{}
}

\date{\today}

\makeatletter
\renewcommand{\address}[1]{\gdef\@address{#1}}
\renewcommand{\email}[1]{\gdef\@email{\url{#1}}}
\newcommand{\@endstuff}{\par\vspace{\baselineskip}\noindent\small
\begin{tabular}{@{}l}\scshape\@address\\\textit{E-mail address:} \@email\end{tabular}}
\AtEndDocument{\@endstuff}
\makeatother

\title[Torsion in $p$-adic étale cohomology: remarks and conjectures]{Torsion in $p$-adic étale cohomology: \\ remarks and conjectures}
\author{Guido Bosco}
\address{Max-Planck-Institut f\"ur Mathematik, Vivatsgasse 7, 53111 Bonn, Germany}
\email{bosco@mpim-bonn.mpg.de}

\begin{document}
 
\begin{abstract}
 
 Let $C$ be a complete algebraically closed extension of $\mathbb{Q}_p$, and let $\mathfrak{X}$ be a smooth formal scheme over $\mathcal{O}_C$. By the work of Bhatt--Morrow--Scholze, it is known that when $\mathfrak{X}$ is proper, the length of the torsion in the integral $p$-adic étale cohomology of the generic fiber $\mathfrak{X}_C$ is bounded above by the length of the torsion in the crystalline cohomology of its special fiber. In this note, we focus on the non-proper case and observe that when $\mathfrak{X}$ is affine, the torsion in the integral $p$-adic étale cohomology of $\mathfrak{X}_C$ can even be expressed as a functor of the special fiber, unlike in the proper case.  As a consequence, we show that, surprisingly, if $\mathfrak{X}$ is affine, the integral $p$-adic étale cohomology groups of $\mathfrak{X}_C$ have finite torsion subgroups. We discuss further applications and propose conjectures predicting the torsion in the integral $p$-adic étale cohomology of a broader class of rigid-analytic varieties over $C$.

\end{abstract}

\maketitle

\setcounter{tocdepth}{2}

\tableofcontents

\section{Introduction}
 
 Let $p$ be a prime number. We denote by $C$ an algebraically closed complete non-archimedean extension of $\Qq_p$ and  we write $\cl O_C$ for its ring of integers. \medskip
 
 In recent years, there has been increasing interest in the study of the integral/rational $p$-adic (pro-)étale cohomology of rigid-analytic varieties over $C$ that are not necessarily proper, such as local Shimura varieties. See e.g. the works \cite{CDN0}, \cite{CDN1}, \cite{CDNint}, \cite{Bosco}, \cite{Bosco2}. One difficulty in the latter works is that, in general, the rational $p$-adic étale cohomology groups of non-proper rigid-analytic varieties over $C$ are not finite-dimensional $\Qq_p$-vector spaces. Consequently, it becomes important to understand whether such cohomology groups have a reasonable topological structure, or, at the integral level, to understand the torsion in the integral $p$-adic étale cohomology of such spaces. \medskip 
 
 Let us begin with the building blocks of rigid-analytic varieties, namely affinoid spaces. While writing \cite{Bosco2}, and in particular thinking about \cite[Conjecture 1.15]{Bosco2},  we arrived from several directions at the following conjecture.

 \begin{conj}\label{mainconj}
  Let $X$ be a smooth affinoid rigid space over $C$. 
  \begin{enumerate}[(i)]
   \item\label{mainconj1} For all $i\in \Zz$, the torsion subgroup of the pro-étale cohomology group $H^i_{\pet}(X, \Zz_p)$ is finite.
   \item\label{mainconj2} For all $n\in \Zz_{\ge 0}$, the $p^n$-torsion subgroup of the Picard group $\Pic(X)$ is finite.
  \end{enumerate}
 \end{conj}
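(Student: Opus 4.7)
The plan is to reduce part (\ref{mainconj1}) to the case where $X$ is the generic fiber of a smooth affine formal scheme $\mathfrak{X}/\cl O_C$ --- the setting already addressed in the body of the paper --- and then derive part (\ref{mainconj2}) from (\ref{mainconj1}) via Kummer theory. For the reduction in (\ref{mainconj1}), I would choose an admissible formal model of $X$ and, using Temkin-style desingularization, pass to a semistable $\mathfrak{X}$; the smooth affine case is handled directly by the paper, and for general semistable $\mathfrak{X}$ one should expect the same special-fiber functoriality after replacing prismatic with log-prismatic cohomology, with finiteness then reduced via Mayer--Vietoris to a finiteness statement on the smooth strata of $\mathfrak{X}_k$.

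For the smooth affine building block, my argument uses the BMS comparison identifying integral $p$-adic pro-étale cohomology with the fiber of $\Frob - 1$ on a Nygaard-filtered quotient of $R\Gamma_{\Prism}(\mathfrak{X})$. In the smooth affine case this prismatic complex is concentrated in a small cohomological range, so the torsion of $H^i_{\pet}(\mathfrak{X}_C, \Zz_p)$ is identified with the kernel and cokernel of an Artin--Schreier-type Frobenius operator on a crystalline cohomology module of $\mathfrak{X}_k$. Computing the latter via the de Rham--Witt complex, and exploiting that smooth affine schemes in characteristic $p$ have controlled Frobenius-fixed points, should yield the required finiteness and realize the torsion as a functor of $\mathfrak{X}_k$.

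For part (\ref{mainconj2}), I would combine the Kummer sequence on the pro-étale site of $X$, which gives
\[
1 \to \cl O(X)^\times/(\cl O(X)^\times)^{p^n} \to H^1_{\pet}(X, \mu_{p^n}) \to \Pic(X)[p^n] \to 1,
\]
with the Bockstein sequence
\[
0 \to H^1_{\pet}(X, \Zz_p(1))/p^n \to H^1_{\pet}(X, \mu_{p^n}) \to H^2_{\pet}(X, \Zz_p(1))[p^n] \to 0.
\]
The rightmost term is finite by (\ref{mainconj1}). Although $H^1_{\pet}(X, \Zz_p(1))$ itself may fail to be finitely generated, the image of $\cl O(X)^\times$ inside it via the Chern class map should capture the full free part up to finite index, so that the contribution of $H^1_{\pet}(X, \Zz_p(1))/p^n$ to $\Pic(X)[p^n]$ is already absorbed by $\cl O(X)^\times/(\cl O(X)^\times)^{p^n}$ modulo a finite error.

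The principal obstacle I foresee is the extension from smooth affine formal models to arbitrary smooth affinoids via semistable reduction: this demands a log-prismatic refinement of the paper's key observation together with a compatibility of the ``torsion-of-the-special-fiber'' functor under admissible blowups. Establishing the latter invariance likely requires working intrinsically on $X$ --- for example, via the $v$- or arc-topology --- rather than tracking a fixed formal model. A secondary subtle point is the Chern-class surjectivity statement needed in (\ref{mainconj2}), which depends on a density statement for rigid units on smooth affinoids obtainable through the $p$-adic logarithm.
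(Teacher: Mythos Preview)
First, a framing point: the paper does \emph{not} prove Conjecture~\ref{mainconj} in general; it establishes only the good reduction case (Theorem~\ref{mainfin}), and explicitly leaves the semistable case as Conjecture~\ref{conjst}, while noting that even the existence of semistable models for arbitrary smooth affinoids is folklore. So your overall reduction --- pick an admissible model, desingularize à la Temkin to semistable, then invoke a log-prismatic analogue --- stacks two open problems on top of each other. You correctly flag these as obstacles, but they are not gaps you can close with the techniques at hand; this is why the paper stops at good reduction.

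For the smooth affine building block itself (where the paper does give a proof), your sketch is in the right neighborhood but misses the two decisive ingredients. First, the passage from $\fr X$ to $\fr X_k$ is not obtained by any coarse concentration of $R\Gamma_{\Prism}(\fr X)$; the paper uses the rigidity theorem of Antieau--Mathew--Morrow--Nikolaus (Theorem~\ref{B4}), which says that the fiber of $\Zz_p(i)^{\syn}(\fr X)\to \Zz_p(i)^{\syn}(\fr X_k)$ lies in degrees $\le i$. This is precisely what isolates $H^{i+1}$ and produces the identification with $H^1(\fr X_k,W\Omega^i_{\log})$. Second, the finiteness of torsion in $H^1(\fr X_k,W\Omega^i_{\log})$ is not a generic ``controlled Frobenius-fixed points on affines'' statement: the paper reduces to the smooth \emph{proper} case via refined alterations and purity for $W_n\Omega^i_{\log}$ (Proposition~\ref{MM}), and then invokes the structural results of Illusie--Raynaud (Lemma~\ref{sin}, Corollary~\ref{IR}). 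Without that compactification-plus-purity step, the affine claim is genuinely hard.

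For part~(ii), your Kummer/Bockstein approach is a real departure from the paper, and it has a gap. The paper never tries to control $H^1_{\pet}(X,\Zz_p(1))/p^n$; instead it transports the question entirely to the special fiber via $\Pic(\fr X_C)\cong\Pic(\fr X)\cong\Pic(\fr X_k)$ (Corollary~\ref{picaffinoid}), and then bounds $\Pic(\fr X_k)[p^n]$ by the finite group $H^0(\fr X_k,W_n\Omega^1_{\log})$ using the identification of $\Zz_p(1)^{\syn}$ with the $p$-completed $\Gg_m$-cohomology (Proposition~\ref{pictor}). Your route requires that the image of $\cl O(X)^\times$ under the Kummer map exhaust $H^1_{\pet}(X,\Zz_p(1))/p^n$ up to finite index, but on a smooth affinoid $H^1_{\pet}(X,\Zz_p(1))$ is typically of infinite $\Zz_p$-rank, and there is no reason the unit group --- or its $p$-adic logarithm --- should account for all of it. That surjectivity-up-to-finite-index statement is not available, so deriving (ii) from (i) along these lines does not go through.
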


 We note that Conjecture \ref{mainconj}\listref{mainconj1} implies in particular that, for $X$ a smooth affinoid rigid space over $C$, the cohomology group $H^i_{\pet}(X, \Qq_p)$ has a natural structure of $\Qq_p$-Banach space (see Proposition \ref{banacchino}), therefore it is not pathological as one might think. \medskip
 
 The first goal of this note is to prove Conjecture \ref{mainconj} in the good reduction case. More precisely:
 
 \begin{theorem}[Theorem \ref{sur}, Corollary \ref{picaffinoid}]\label{mainfin}
  Conjecture \ref{mainconj} holds true in the case $X$ has an affine smooth $p$-adic formal model over $\cl O_C$.
 \end{theorem}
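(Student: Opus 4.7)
The plan is to exploit the fact that $\mathfrak X$ being affine forces the higher coherent cohomology of $\mathfrak X$ and of its special fiber $\mathfrak X_k$ to vanish, so that both invariants in the conjecture are controlled by, and indeed expressible as functors of, the special fiber. Since $\mathfrak X_k$ is a smooth affine variety over the algebraically closed perfect field $k$ of characteristic $p$, standard results in positive-characteristic algebraic geometry should then deliver the desired finiteness.

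For the Picard statement \listref{mainconj2}, I first pass from the rigid space $X = \mathfrak X_C$ to its formal model: rigid/formal GAGA identifies $\Pic(X)$ with $\Pic(\mathfrak X)$. Since $R$ is $p$-adically complete with $H^i(\mathfrak X_k, \mathcal O) = 0$ for $i \geq 1$, a successive-approximation argument along the surjections $R/p^{n+1} \to R/p^n$ shows that $\Pic(\mathfrak X)$ is isomorphic to $\Pic(\mathfrak X_k)$. The $p^n$-torsion of $\Pic$ of any smooth affine variety over $\bar\Ff_p$ is finite: one can embed $\mathfrak X_k$ into a smooth projective compactification $\overline{\mathfrak X_k}$, whose Picard scheme has finite $p^n$-torsion in each degree, and quotient by the finitely generated subgroup generated by boundary divisor classes.

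For the pro-étale statement \listref{mainconj1}, the main tool is the BMS prismatic / $A_{\inf}$-cohomology $A\Omega_R := R\Gamma_{A_{\inf}}(\mathfrak X)$. This complex recovers $R\Gamma_{\pet}(X, \Zz_p)$ via a Frobenius-equalizer construction, and specializes under $\theta$ to the de Rham complex $\Omega^\bullet_{R/\mathcal O_C}$, and under the crystalline base change to $R\Gamma_{\crys}(\mathfrak X_k / W(k))$. The affine hypothesis forces all higher Hodge cohomologies $H^i(\mathfrak X, \Omega^j_{\mathfrak X / \mathcal O_C})$, $i \geq 1$, to vanish, so the conjugate spectral sequence for $A\Omega_R$ degenerates and the torsion in its cohomology is concentrated in, and expressible as a functor of, invariants of the special fiber — for instance the torsion in $R\Gamma_{\crys}(\mathfrak X_k / W(k))$ together with its Frobenius. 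Combined with the Frobenius-equalizer description, this expresses $H^i_{\pet}(X, \Zz_p)_{\tor}$ as a functor of $\mathfrak X_k$. Finally, for a smooth affine $\bar\Ff_p$-variety, the Hodge-Witt description of integral $p$-adic étale cohomology via $W\Omega^*_{\log}$ and standard Artin-Schreier-Witt finiteness imply that the resulting invariant has finite $p$-primary torsion in each degree.

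The chief obstacle is turning the BMS comparison from an inequality into an identification. In the proper case, one only obtains a length bound on the torsion of étale cohomology via the torsion of crystalline cohomology. The improvement to a functorial identification in the affine case must genuinely use the vanishing of higher coherent cohomology to split off the ``generic'' Hodge-Tate contribution to $A\Omega_R$. Tracking this together with the Frobenius structure across the base changes to $\mathcal O_C$ and to $W(k)$, and ensuring that the Frobenius equalizer extracts precisely a finite piece, will be the technical heart of the argument.
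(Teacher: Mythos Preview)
Your proposal identifies the right overall strategy---reduce both parts to statements about the special fiber $\fr X_k$ and then prove finiteness there---but there are genuine gaps in both halves.

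For part \listref{mainconj1}, the approach via $A\Omega_R$ from \cite{BMS1} is not the right tool, and your own final paragraph concedes this: turning the BMS comparison into a functorial identification of torsion is precisely the hard step, and you do not explain how to carry it out. The paper instead works with the syntomic sheaves $\Zz_p(i)^{\syn}$ of \cite{BMS2}. By the nearby-cycles comparison $\Zz_p(i)^{\syn}\simeq \tau^{\le i}R\psi_*\Zz_p(i)$ one gets $H^{i+1}_{\pet}(\fr X_C,\Zz_p)[p^n]\cong H^{i+1}(\Zz_p(i)^{\syn}(\fr X))[p^n]$, and the decisive input---entirely absent from your sketch---is the rigidity theorem of Antieau--Mathew--Morrow--Nikolaus \cite[Theorem 5.2]{AMMN}: for the henselian pair $(R,\fr m_C R)$ the fiber of $\Zz_p(i)^{\syn}(\fr X)\to\Zz_p(i)^{\syn}(\fr X_k)$ is concentrated in degrees $\le i$. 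This immediately yields $H^{i+1}_{\pet}(\fr X_C,\Zz_p)[p^n]\cong H^1(\fr X_k,W\Omega_{\log}^i)[p^n]$, which is the precise ``functor of the special fiber'' you were seeking. Your appeal to ``standard Artin--Schreier--Witt finiteness'' to finish is also too optimistic: for an \emph{affine} smooth $k$-variety $Y$, the finiteness of $H^1(Y,W\Omega_{\log}^i)_{\tor}$ is not standard. The paper proves it by first showing $H^0(Y,W_n\Omega_{\log}^i)$ is finite via refined alterations \cite{BhattSnow} and purity for log de Rham--Witt sheaves \cite{Gros}, \cite{Shiho}, reducing to the smooth proper case where one invokes the Illusie--Raynaud structure theory \cite[II, Corollaire 3.11]{IR}; a derived $p$-completeness argument then upgrades finiteness of $H^1(Y,W\Omega_{\log}^i)[p]$ to finiteness of the whole torsion subgroup.

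For part \listref{mainconj2}, your reduction $\Pic(\fr X_C)\cong\Pic(\fr X)\cong\Pic(\fr X_k)$ is correct and matches the paper. But your argument for the finiteness of $\Pic(\fr X_k)[p^n]$ invokes a \emph{smooth} projective compactification of $\fr X_k$, which is not known to exist in characteristic $p$; the paper explicitly notes that this route was taken in \cite{PicTors} only conditionally on resolution of singularities. (One could try to rescue this using a normal projective compactification together with the fact that $(\Pic^0)_{\redd}$ is then an abelian variety, but this is not what you wrote.) The paper instead deduces the finiteness of $\Pic(Y)[p^n]$ from the surjection $H^0(Y,W_n\Omega_{\log}^1)\twoheadrightarrow\Pic(Y)[p^n]$ coming from $\Zz_p(1)^{\syn}\simeq R\Gamma_{\ett}(-,\Gg_m)^{\wedge}[-1]$, combined with the finiteness of $H^0(Y,W_n\Omega_{\log}^1)$ already established above.
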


 We prove Theorem \ref{mainfin} by reducing it to a problem in characteristic $p$. For the first part of the statement, i.e. in order to prove Conjecture \ref{mainconj}\listref{mainconj1} in the good reduction case, we will rely on the following result expressing the $p^n$-torsion in the integral $p$-adic étale cohomology in terms of the special fiber. \medskip
 
 We denote by $k$ the residue field of $C$.

 \begin{theorem}[Theorem \ref{comparison}, Theorem \ref{comparisonStein}]\label{mainobs}
  Let $\fr X$ be a smooth $p$-adic formal scheme over $\cl O_C$ that is affine, or, more generally, can be written as a countable increasing union of affine opens.  Then, for all $i\in \Zz$ and $n\in \Zz_{\ge 0}$ we have a natural isomorphism
  $$H^{i+1}_{\pet}(\fr X_{C}, \Zz_p)[p^n]\cong H^1(\fr X_k, W\Omega_{\log}^i)[p^n].$$
  Here, we denote by $W\Omega_{\log}^i$ the logarithmic de Rham--Witt sheaf (see Notation \ref{dRW}).
 \end{theorem}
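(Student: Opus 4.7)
The plan is to use the $p$-adic syntomic complex of Bhatt--Morrow--Scholze and Bhatt--Lurie to compare the pro-étale cohomology of $\fr X_C$ with the logarithmic de Rham--Witt cohomology of $\fr X_k$, bridging the two via the Geisser--Levine identification of the syntomic complex in characteristic $p$.

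\emph{Reduction to mod $p^n$ coefficients.} Since $C$ is algebraically closed, a choice of compatible $p$-power roots of unity identifies $\Zz_p(i) \cong \Zz_p$, so we may rewrite the left hand side as $H^{i+1}_{\pet}(\fr X_C, \Zz_p(i))[p^n]$. The Bockstein short exact sequence
$$0 \to H^i_{\pet}(\fr X_C, \Zz_p(i))/p^n \to H^i_{\pet}(\fr X_C, \Zz/p^n(i)) \to H^{i+1}_{\pet}(\fr X_C, \Zz_p(i))[p^n] \to 0,$$
together with the analogous sequence on the special fiber arising from Illusie's identification $W_n\Omega_{\log}^i \cong W\Omega_{\log}^i/p^n$, reduces the claim to producing a natural, $n$-compatible isomorphism
$$H^i_{\pet}(\fr X_C, \Zz/p^n(i)) \cong H^0(\fr X_k, W_n\Omega_{\log}^i).$$

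\emph{Syntomic/pro-étale comparison.} For the smooth $p$-adic formal scheme $\fr X$ over $\cl O_C$, the sheafified Bhatt--Lurie syntomic complex $\Zz/p^n(i)^{\syn}_{\fr X}$ receives a natural comparison map to the pro-étale cohomology of the generic fiber,
$$\RG_{\syn}(\fr X, \Zz/p^n(i)) \to \RG_{\pet}(\fr X_C, \Zz/p^n(i)),$$
which is an isomorphism in degrees $\le i$ (this is the basic Fontaine--Messing/Colmez--Niziol comparison, as revisited in the prismatic framework). In particular $H^i_{\pet}(\fr X_C, \Zz/p^n(i)) \simeq H^i_{\syn}(\fr X, \Zz/p^n(i))$.

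\emph{Reduction to the special fiber.} The reduction map $\fr X \to \fr X_k$ induces a map on syntomic cohomology, whose fiber is controlled by the Nygaard filtration on (completed) prismatic cohomology. The graded pieces of this filtration are quasi-coherent Hodge-type components, hence have vanishing higher cohomology on the smooth affine $\fr X$; combined with the fact that the Frobenius on prismatic cohomology intertwines the generic and special fiber descriptions, this forces $H^i_{\syn}(\fr X, \Zz/p^n(i)) \simeq H^i_{\syn}(\fr X_k, \Zz/p^n(i))$. By the Geisser--Levine identification $\Zz/p^n(i)^{\syn}_{\fr X_k} \simeq W_n\Omega_{\log}^i[-i]$, the right hand side is $H^0(\fr X_k, W_n\Omega_{\log}^i)$. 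Taking the inverse limit over $n$ and comparing the Bockstein sequences yields the affine case.

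\emph{Stein case and main obstacle.} For $\fr X = \bigcup_m \fr X_m$ a countable increasing union of affine opens, both sides fit into Milnor $\lim$--$\lim^1$ exact sequences obtained from the admissible covering on the pro-étale side and from the ordinary inverse limit on the $W\Omega_{\log}^i$ side. The affine isomorphism is functorial in $\fr X$ and hence passes to the limit, leaving one to check that the $\lim^1$ contributions on both sides are compatibly controlled so that the $p^n$-torsion identification survives. The principal obstacle is the reduction-to-special-fiber step: proving that the fiber of the syntomic map $\RG_{\syn}(\fr X, \Zz/p^n(i)) \to \RG_{\syn}(\fr X_k, \Zz/p^n(i))$ does not contribute in degree $i$ for smooth affine $\fr X$ requires the precise structure of the Nygaard-filtered prismatic complex, its Frobenius, and the vanishing of higher cohomology of its graded pieces on an affine formal scheme. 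Once this is in place, Geisser--Levine provides the clean bridge from the mixed characteristic pro-étale side to the characteristic $p$ logarithmic de Rham--Witt side.
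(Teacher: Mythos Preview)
Your overall strategy---bridging pro-\'etale and log de Rham--Witt via syntomic cohomology---is exactly the paper's, but the execution contains a genuine error in the ``reduction to mod $p^n$'' step.

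\textbf{The reduction is false, not just unjustified.} You claim that it suffices to produce an isomorphism
\[
H^i_{\pet}(\fr X_C,\Zz/p^n(i))\;\cong\;H^0(\fr X_k,W_n\Omega_{\log}^i).
\]
But this isomorphism does \emph{not} hold in general. Take $\fr X=\Spf\cl O_C\langle T\rangle$, $i=n=1$. On the special fiber, a Gersten-type computation gives $H^0(\Aa^1_k,\Omega_{\log}^1)=0$. On the generic fiber, $H^1_{\pet}(\fr X_C,\mu_p)\cong C\langle T\rangle^{*}/(C\langle T\rangle^{*})^{p}$ (as $\Pic=0$), and this group is infinite: for $|p|\le |a|<1$ the unit $1-aT$ is not a $p$-th power, since comparing $T$-coefficients in $(1+h)^p=1-aT$ forces $b_1=-a/p$ with $|b_1|\ge 1$, contradicting $|h|<1$. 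So the two sides cannot be isomorphic.

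The source of the error is your ``reduction to the special fiber'' step: the rigidity theorem of Antieau--Mathew--Morrow--Nikolaus only says that the fiber of
\[
\Zz_p(i)^{\syn}(\fr X)\longrightarrow \Zz_p(i)^{\syn}(\fr X_k)
\]
lives in degrees $\le i$. This makes the map an isomorphism in degrees $\ge i+1$ and merely a \emph{surjection} in degree $i$. Your heuristic (``Nygaard graded pieces are quasi-coherent, hence acyclic on affines'') does not sharpen the bound to degrees $<i$; the bound $\le i$ is optimal, as the example shows.

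\textbf{What the paper does instead.} The paper never compares the degree-$i$ groups. It uses the map of complexes $\Zz_p(i)^{\syn}(\fr X)\to R\Gamma_{\pet}(\fr X_C,\Zz_p(i))$, which is an isomorphism in degrees $\le i$, to obtain a map of Bockstein sequences with isomorphic left and middle terms; the five lemma then gives
\[
H^{i+1}_{\pet}(\fr X_C,\Zz_p(i))[p^n]\;\cong\;H^{i+1}\bigl(\Zz_p(i)^{\syn}(\fr X)\bigr)[p^n].
\]
Now AMMN rigidity, used in degree $i+1$ where it \emph{is} an isomorphism, identifies the right-hand side with $H^{i+1}\bigl(\Zz_p(i)^{\syn}(\fr X_k)\bigr)[p^n]=H^1(\fr X_k,W\Omega_{\log}^i)[p^n]$. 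The entire argument takes place in cohomological degree $i+1$; attempting to pass through degree $i$ is precisely what fails.

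\textbf{On the Stein case.} Your sketch (``$\lim^1$ contributions are compatibly controlled'') omits the essential input. The paper shows that extending rigidity to $\fr X=\bigcup_r\fr U_r$ reduces to the vanishing $R^1\lim_r H^0(U_r,\Omega_{\log}^i)=0$, which is obtained from the Mittag--Leffler criterion once one knows $H^0(U_r,\Omega_{\log}^i)$ is \emph{finite}. That finiteness (Proposition~\ref{MM}) is a separate, nontrivial result proved by reduction to the smooth proper case via alterations and purity for $W_n\Omega_{\log}^i$; it does not follow from functoriality of the affine case alone.
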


 \begin{rmk}[Comparison to the proper case]
  We note that Theorem \ref{mainobs} cannot hold for any smooth $p$-adic formal scheme $\fr X$ over $\cl O_C$. In fact, in the case $\fr X$ is proper, Bhatt--Morrow--Scholze proved that (cf. \cite[Theorem 1.1, (ii)]{BMS1}), for all $i, n\in \Zz_{\ge 0}$, we have the inequality
 \begin{equation*}
 \length_{\Zz_p}(H^i_{\pet}(\fr X_C, \Zz_p)[p^n])\le \length_{W(k)}(H^i_{\crys}(\fr X_k/W(k))[p^n]),
 \end{equation*}
  but, the torsion in $H^i_{\pet}(\fr X_C, \Zz_p)$ is not a functor of the special fiber, \cite[Remark 2.4]{BMS1}. Therefore, Theorem \ref{mainobs} is somewhat orthogonal to the cited results in the proper case.
 \end{rmk}
 
 The proof of Theorem \ref{mainobs} in the affine case relies on the work \cite{BMS2}, in particular on the comparison between $p$-adic étale and syntomic cohomology, and crucially uses a rigidity result for the syntomic cohomology proved by Antieau--Mathew--Morrow--Nikolaus, \cite{AMMN}. Using Theorem \ref{mainobs} in the affine case, we prove Theorem \ref{mainfin} by employing results on the structure of logarithmic de Rham--Witt cohomology due to Illusie--Raynaud, \cite{IR}. On the other hand, the extension of Theorem \ref{mainobs} beyond the affine case relies on the same finiteness results shown in the proof of Theorem \ref{mainfin}. \medskip

 We expect that one can prove Conjecture \ref{mainconj} in the semistable reduction case using a logarithmic variant of the results in this paper.

 \begin{rmk}[On Drinfeld's upper half-spaces] In light of the above, it is natural to conjecture, in particular, an extension of Theorem \ref{mainobs} to the semistable reduction case; see Conjecture \ref{conjst} for a precise statement. More generally, one could ask for a version of Theorem \ref{mainobs} for $\fr X$ a semistable $p$-adic formal scheme over $\cl O_C$ whose generic fiber $\fr X_C$ can be written as a countable increasing union of open affinoids. The appeal of a positive answer to the latter question lies in the fact that it could be applied, for example, to Stein rigid spaces over $C$ admitting a semistable formal model over $\cl O_C$, such as the $d$-dimensional Drinfeld's upper half-space $\Hh_C^d$ over $C$, for $d \in \Zz_{\ge 1}$. In this example, this would lead to a conceptual proof of a result by Colmez--Dospinescu--Nizioł, \cite{CDNint}, which states that 
 \begin{equation}\label{tff} 
 H_{\pet}^i(\Hh_C^d, \Zz_p) \text{ is torsion-free for all } i \ge 0. \end{equation} In fact, denoting by $\fr H$ the standard semistable formal model over $\cl O_C$ of $\Hh_C^d$, \cite[\S 6.1]{GKdri}, \cite[\S 5.1]{CDN1},\footnote{More precisely, $\fr H$ is the base change to $\cl O_C$ of the semistable formal model over $\Zz_p$ of the $d$-dimensional Drinfeld's upper half-space over $\Qq_p$ considered in \textit{loc. cit.}.} by a result of Gr\"osse-Klonne (see \cite[Corollary 6.25(2)]{CDN1}), we have 
 $$H^j(\fr H_k/k^0,  W\Omega_{  \log}^i)=0 \text{ for all } j>0 \text{ and } i\ge 0$$
 (see Notation \ref {notss} for the log structures we are considering here).
 In \cite{CDNint}, the result (\ref{tff}) is deduced from an explicit computation of $H_{\pet}^i(\Hh_C^d, \Zz_p)$, \cite[Theorem 1.1]{CDNint}; on the other hand, as explained in the introduction of \textit{op. cit.}, using  (\ref{tff}), this computation can be immediately deduced from its rational variant, previously treated in \cite[Theorem 6.55]{CDN1}.
 \end{rmk}

 We conclude recalling that, according to a folklore conjecture, any smooth affinoid rigid space over $C$ should have a semistable $p$-adic formal model over $\cl O_C$. However, it would be interesting to find a proof of Conjecture \ref{mainconj} working purely in terms of rigid-analytic varieties. We hope that, in the future, tools from integral $p$-adic Hodge theory of rigid-analytic varieties will become available to help address these questions.

 \begin{Acknowledgments}
The main ideas behind this note emerged during conversations with Matthew Morrow in the fall of 2021. However, I was only able to complete the proof of Theorem \ref{mainfin} recently, after coming across Illusie–Raynaud’s result \cite[II, Corollaire 3.11]{IR}. I warmly thank Matthew for his help and encouragement. I am grateful to Alexander Petrov for many helpful comments and for suggesting Lemma \ref{derived-p}, which led to a simplification of my previous argument. I also thank Gabriel Dospinescu, Akhil Mathew, and Peter Scholze for comments or corrections on a previous version of this note and for subsequent discussions. Additionally, I would like to thank Marco D’Addezio, Ian Gleason,  and Wiesława Nizioł for helpful conversations.
 \end{Acknowledgments}

 \begin{convnot}\label{cvv}
  Let $p$ be a prime number.
  Unless otherwise stated, we denote by $C$ an algebraically closed complete non-archimedean extension of $\Qq_p$, and we write $\cl O_C$ for its ring of integers. \medskip
  
  All formal schemes are $p$-adic and locally of finite type over the base. Given a formal scheme $\fr X$ over $\cl O_C$, we denote by $\fr X_C$ its generic fiber regarded as an adic space. In general, all rigid-analytic varieties will be regarded as adic spaces. \medskip
  
  We fix an uncountable cardinal $\kappa$ as in \cite[Lemma 4.1]{Scholze3}.  We denote by $\CondAb$ the category of $\kappa$-condensed abelian groups (and the prefix ``$\kappa$'' will often be tacit), \cite[Definition 2.1]{Scholzecond}. \medskip
  
  For  an adic space $X$  over $\Qq_p$, we denote by $X_{\pet}$ its $\kappa$-small pro-\'etale site.
  Given  a sheaf of abelian groups $\cl F$ on $X_{\pet}$, we regard, especially in \S \ref{esp}, the pro-\'etale cohomology complex $R\Gamma_{\pet}(X, \cl F):=R\Gamma(X_{\pet}, \cl F)$ in the derived category $D(\CondAb)$ (see also \cite[Definition 2.7]{Bosco}).
  We adopt similar notation and conventions for the $\kappa$-small pro-\'etale site of a (formal) scheme. \medskip
  
  For an abelian group $A$ and $m\in \Zz_{\ge 1}$, we denote by $A[m]\subseteq A$ the kernel of the multiplication by $m$ on $A$ 
 \end{convnot}

\section{Rigidity of torsion}\label{mc}
 
 Our starting point is the following result.
 
 \begin{theorem}\label{comparison}
  Let $\fr X$ be an affine smooth formal scheme over $\cl O_C$. Let $k$ denote the residue field of $C$. Then, for all $i\in \Zz$ and $n\in \Zz_{\ge 0}$ we have a natural isomorphism
  $$H^{i+1}_{\pet}(\fr X_{C}, \Zz_p)[p^n]\cong H^1(\fr X_k, W\Omega_{\log}^i)[p^n].$$
  Here, we denote by $W\Omega_{\log}^i$ the logarithmic de Rham--Witt sheaf (see Notation \ref{dRW}).
 \end{theorem}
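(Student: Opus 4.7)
The strategy is to identify $p^n$-torsion in pro-étale cohomology with $p^n$-torsion in BMS2 syntomic cohomology, then invoke a rigidity result to reduce to the special fiber, where the Bloch--Kato--Gabber theorem provides the link with logarithmic de Rham--Witt cohomology.

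First, using that $C$ contains a compatible system of $p$-power roots of unity, I would choose a non-canonical isomorphism $\Zz_p \cong \Zz_p(i)$ of pro-étale sheaves, reducing the statement to one about $H^{i+1}_{\pet}(\fr X_C, \Zz_p(i))[p^n]$. Then I would invoke the Bhatt--Morrow--Scholze syntomic--pro-étale comparison: the natural map
\[
\RG_{\syn}(\fr X, \Zz_p(i)) \longrightarrow \RG_{\pet}(\fr X_C, \Zz_p(i))
\]
induces isomorphisms on $H^j$ for $j \le i$, and similarly with $\Zz/p^n$-coefficients. Chasing the Bockstein short exact sequence
\[
0 \to H^i(-, \Zz_p(i))/p^n \to H^i(-, \Zz/p^n(i)) \to H^{i+1}(-, \Zz_p(i))[p^n] \to 0
\]
on both sides, the five-lemma then yields $H^{i+1}_{\pet}(\fr X_C, \Zz_p(i))[p^n] \cong H^{i+1}_{\syn}(\fr X, \Zz_p(i))[p^n]$.

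Next, I would apply the rigidity of mod-$p^n$ syntomic cohomology of Antieau--Mathew--Morrow--Nikolaus, together with the lemma of Petrov mentioned in the acknowledgments, to identify
\[
\RG_{\syn}(\fr X, \Zz/p^n(i)) \simeq \RG_{\syn}(\fr X_k, \Zz/p^n(i)).
\]
In characteristic $p$, the Bloch--Kato--Gabber (Geisser--Levine) theorem identifies the mod-$p^n$ syntomic complex with the logarithmic de Rham--Witt sheaf, $\Zz/p^n(i)_{\syn}|_{\fr X_k} \simeq W_n\Omega_{\log}^i[-i]$, giving $H^i_{\syn}(\fr X, \Zz/p^n(i)) \cong H^0(\fr X_k, W_n\Omega_{\log}^i)$. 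Since the transition maps $W_{n+1}\Omega_{\log}^i \twoheadrightarrow W_n\Omega_{\log}^i$ are surjective, the inverse system is Mittag-Leffler, so $R\lim_n$ is well-behaved and yields $H^i_{\syn}(\fr X, \Zz_p(i)) \cong H^0(\fr X_k, W\Omega_{\log}^i)$.

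Finally, I would compare the Bockstein sequence for syntomic $\Zz_p(i)$-cohomology with the long exact sequence associated to $0 \to W\Omega_{\log}^i \xrightarrow{p^n} W\Omega_{\log}^i \to W_n\Omega_{\log}^i \to 0$ to deduce
\[
H^{i+1}_{\syn}(\fr X, \Zz_p(i))[p^n] \cong H^1(\fr X_k, W\Omega_{\log}^i)[p^n],
\]
which combined with the first step proves the theorem. The main obstacle, as I see it, will be pinning down the BMS2 syntomic--pro-étale comparison in sufficiently high degree to control $p^n$-torsion in degree $i+1$, and checking that the AMMN rigidity applies to smooth affine formal schemes over $\cl O_C$ rather than merely over discretely valued rings. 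A related subtlety is managing the interplay of $R\lim_n$ with the Bockstein sequences on the syntomic side so that comparisons at each finite level assemble cleanly into the desired inverse-limit statement.
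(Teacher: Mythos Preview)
Your overall strategy is exactly the paper's: compare pro-\'etale and BMS2 syntomic cohomology, invoke AMMN rigidity to pass to the special fiber, and identify syntomic cohomology in characteristic $p$ with logarithmic de Rham--Witt. Your first step---matching the $p^n$-torsion in degree $i+1$ by running the Bockstein sequence on both sides and using $\Zz_p(i)^{\syn}\simeq\tau^{\le i}R\psi_*\Zz_p(i)$---is precisely the paper's opening move.

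The gap is in your application of rigidity. AMMN does \emph{not} yield a quasi-isomorphism $\RG_{\syn}(\fr X,\Zz/p^n(i))\simeq\RG_{\syn}(\fr X_k,\Zz/p^n(i))$; it only asserts that the fiber of $\Zz_p(i)^{\syn}(\fr X)\to\Zz_p(i)^{\syn}(\fr X_k)$ is concentrated in cohomological degrees $\le i$ (for the henselian pair $(R,\fr m_C R)$; checking that this pair is henselian over $\cl O_C$, which you correctly flag as a point to verify, is the content of Corollary~\ref{B5}). Consequently, in degree $i$ the reduction map is only surjective, and your identification $H^i_{\syn}(\fr X,\Zz/p^n(i))\cong H^0(\fr X_k,W_n\Omega_{\log}^i)$ is not justified; the detour through $R\lim_n$ and a second Bockstein comparison rests on it. The ``lemma of Petrov'' you invoke is Lemma~\ref{derived-p}; it plays no role in this theorem and enters only later, in the finiteness arguments of \S\ref{aff}.

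The paper bypasses all of this by working one degree higher and integrally: since the fiber lives in degrees $\le i$, the map
\[
H^{i+1}(\Zz_p(i)^{\syn}(\fr X))\longrightarrow H^{i+1}(\Zz_p(i)^{\syn}(\fr X_k))
\]
is already an isomorphism on the nose, with no need for mod-$p^n$ coefficients, Mittag--Leffler arguments, or a second Bockstein. Combining this with the characteristic-$p$ identification $\Zz_p(i)^{\syn}\simeq W\Omega_{\log}^i[-i]$ gives $H^{i+1}(\Zz_p(i)^{\syn}(\fr X))\cong H^1(\fr X_k,W\Omega_{\log}^i)$, and taking $p^n$-torsion finishes. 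Your route can be repaired simply by using rigidity in degree $i+1$ rather than degree $i$, at which point it collapses to the paper's argument.
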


 We will prove Theorem \ref{comparison} via comparison with syntomic cohomology, using crucially a rigidity result for the latter, due to Antieau--Mathew--Morrow--Nikolaus, \cite[Theorem 5.2]{AMMN}. 
 
 \subsubsection{\normalfont{\textbf{$p$-adic Tate twists}}}
 
 We begin by recalling some crucial results, due to Bhatt--Morrow--Scholze \cite{BMS2}, on the quasisyntomic sheaves $\Zz_p(i)^{\syn}$.
 
 \begin{notation}
 Let $\QSyn$ denote the quasisyntomic site, \cite[\S 4]{BMS2}, and let $\QRSPerfd\subset \QSyn$ denote its basis consisting of the quasiregular semiperfectoid rings, \cite[Proposition 4.21]{BMS2}. \medskip 
 
  For $i\in \Zz_{\ge 0}$,  we denote by $\Zz_p(i)^{\syn}$ the sheaf on $\QSyn$ introduced in \cite[\S 7.4]{BMS2}, defined for $R\in \QRSPerfd$ by
  $$\Zz_p(i)^{\syn}(R):=\fib(\Fil_{\cl N}^i\widehat\Prism_{R}\{i\}\xrightarrow{\varphi_i-1}\widehat\Prism_{R}\{i\})$$
  where, in terms of \cite{Prisms} and \cite{BL1}, we denote by $\widehat\Prism_{R}\{i\}$ the Breuil--Kisin twisted Nygaard completed absolute prismatic cohomology of $R$, we write $\Fil_{\cl N}^\star$ for the Nygaard filtration (\cite[Theorem 1.12(3)]{BMS2}, \cite[Theorem 13.1]{Prisms}), and we denote by $\varphi_i$ the $i$-th divided Frobenius map. \medskip
  
 For $n\in \Zz_{\ge 1}$, we define the sheaf  on $\QSyn$
 $$\Zz/p^n(i)^{\syn}:=\Zz_p(i)^{\syn}/p^n.$$
 \end{notation}

 In mixed characteristic, the sheaves defined above can be compared to $p$-adic nearby cycles, as we now recall.
 In the following, given a smooth formal scheme $\mathfrak X$ over $\cl O_C$, we regard $\Zz_p(i)^{\syn}$ on the pro-étale site $\mathfrak X_{\pet}$ using \cite[Remark 10.4]{BMS2}.
 
 \begin{theorem}[{\cite[Theorem 10.1]{BMS2}}]\label{B2} Let $\mathfrak X$ be a smooth formal scheme over $\cl O_C$ with generic fiber $X$. Denote by $\psi: X_{\pet}\to \mathfrak X_{\pet}$ the natural morphism of sites. Then, there is a natural isomorphism of sheaves of complexes on the pro-étale site $\mathfrak X_{\pet}$
 $$\Zz_p(i)^{\syn}\simeq \tau^{\le i} R\psi_* \Zz_p(i)$$
 and, for all $n\in \Zz_{\ge 1}$, there is a natural isomorphism
 $$\Zz/p^n(i)^{\syn}\simeq \tau^{\le i} R\psi_* \Zz/p^n(i).$$
 \end{theorem}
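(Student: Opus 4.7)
The plan is to prove the mod-$p^n$ isomorphism first; the integral statement then follows by taking the derived $p$-complete inverse limit over $n$, since both sides are manifestly $p$-complete. Since the claim compares sheaves of complexes on $\fr X_{\pet}$, by descent it suffices to verify the comparison on a basis of pro-étale opens. I would pass to a basis of $\fr X_{\pet}$ consisting of formal affines $\Spf R$ with $R$ an integral perfectoid $\cl O_C$-algebra (so that $R/p$ is quasiregular semiperfectoid). Such pieces form a basis of $\fr X_{\pet}$ by Scholze's perfectoid pro-étale theory, and on them both $\Zz/p^n(i)^{\syn}$ and $R\psi_*\Zz/p^n(i)$ admit explicit descriptions.

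The construction of the comparison map rests on a Fontaine--Messing-type fundamental fiber sequence: on $\Spf R$ with $R$ an integral perfectoid, one expects a natural identification
$$R\Gamma_{\pet}(\Spa(R[1/p],R), \Zz/p^n(i)) \;\simeq\; \fib\!\left(\Fil_{\cl N}^i \widehat\Prism_R\{i\}/p^n \xrightarrow{\varphi_i - 1} \widehat\Prism_R\{i\}/p^n\right),$$
whose right-hand side is, by definition, $\Zz/p^n(i)^{\syn}(R)$. On a perfectoid $R$ the Nygaard filtration collapses to an explicit $\xi$-adic filtration on $\widehat\Prism_R$ and the divided Frobenius $\varphi_i$ becomes $\varphi/p^i$; the displayed sequence then reduces to Fontaine's classical fundamental exact sequence of $p$-adic Hodge theory, after matching the Breuil--Kisin twist $\{i\}$ with the Tate twist $\Zz/p^n(i)$ via the orientation $\mu = [\epsilon] - 1$. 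For a general quasiregular semiperfectoid $R$ one descends from the perfectoid case along the quasisyntomic topology, using that both the syntomic sheaf and $R\psi_*\Zz/p^n(i)$ are quasisyntomic sheaves (the latter because the pro-étale cohomology of $\Spa(R[1/p],R)$ varies functorially in $R$ through perfectoid \v Cech nerves).

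The final ingredient is controlling the truncation $\tau^{\le i}$. On the perfectoid basis above, the objects $\Fil_{\cl N}^i\widehat\Prism_R\{i\}/p^n$ and $\widehat\Prism_R\{i\}/p^n$ are discrete by the Hodge--Tate comparison, so their fiber is concentrated in degrees $0$ and $1$; combined with perfectoid vanishing of higher pro-étale cohomology of $\Zz/p^n(i)$ on affinoid perfectoids, this forces both sides to be concentrated in degrees $\le i$ locally, making $\tau^{\le i}$ redundant on the basis. Globally on $\fr X_{\pet}$, $\tau^{\le i}$ cuts off exactly the higher nearby cycles that fall outside the syntomic range and do not contribute to $\Zz_p(i)^{\syn}$ by construction. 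The hard part — and genuine content — is the fundamental fiber sequence of the second paragraph: identifying the pro-étale $\Zz/p^n(i)$-cohomology of the perfectoid affinoid $\Spa(R[1/p], R)$ with the Nygaard-filtered divided-Frobenius fiber, and in particular matching the prismatic Breuil--Kisin twist $\{i\}$ with the étale Tate twist. Once this is in place, the theorem follows from quasisyntomic descent and the local concentration in degrees $\le i$.
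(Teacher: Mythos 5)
First, a framing remark: the paper does not prove Theorem \ref{B2} at all --- it is quoted verbatim from \cite[Theorem 10.1]{BMS2} and used as a black box --- so there is no internal proof to compare against. Your sketch is an attempt to reconstruct the Bhatt--Morrow--Scholze argument, and in outline it does follow their strategy (local computation on perfectoids via the fundamental exact sequence, then descent). Two steps, however, contain genuine gaps. The first is the claim that formal affines $\Spf R$ with $R$ integral perfectoid form a basis of $\fr X_{\pet}$: the standard covers obtained by adjoining $p$-power roots of coordinates, e.g. $\Spf \cl O_C\langle T^{\pm 1/p^\infty}\rangle \to \Spf \cl O_C\langle T^{\pm 1}\rangle$, are quasisyntomic but not pro-\'etale on the integral level ($T\mapsto T^p$ is ramified modulo $p$). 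This is exactly why the left-hand side must be handled by quasisyntomic descent --- whose \v Cech nerve consists of quasiregular semiperfectoid, not perfectoid, rings, on which $\Zz/p^n(i)^{\syn}$ is no longer concentrated in degree $0$ --- while the right-hand side is controlled by pro-\'etale descent on the generic fiber together with the vanishing of higher cohomology on affinoid perfectoids; identifying the outputs of these two different descent procedures is part of the work.

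The more serious gap is the treatment of $\tau^{\le i}$, which as written is circular. Truncation does not commute with totalization, so agreement of the two sides on the local pieces (where both sit in degrees $[0,1]$, not merely $\le i$) does not propagate to the asserted isomorphism of sheaves. Concretely, $R\psi_*\Zz/p^n(i)$ has nonvanishing cohomology sheaves in all degrees up to $\dim \fr X$ (locally it is continuous cohomology of $\Zz_p(1)^d$), so the truncation discards genuine classes; one must prove both that the cohomology sheaves of $\Zz/p^n(i)^{\syn}$ on $\fr X_{\pet}$ vanish in degrees $>i$ --- note that the \emph{presheaf} $H^{i+1}$ does not vanish on affines, which is the entire subject of the present paper, so this is a statement about sheafification --- and that the comparison map is an isomorphism on cohomology sheaves in degrees $\le i$. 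This degree bookkeeping, carried out in \cite{BMS2} by comparing the quasisyntomic \v Cech complex (controlled via the Nygaard filtration) with the Koszul/group-cohomology computation of the nearby cycles, is the actual content of the theorem and is asserted rather than argued in your sketch. A smaller point: passing from the mod-$p^n$ statements to the integral one requires commuting $\tau^{\le i}$ with $R\lim_n$, which is not automatic and needs a Mittag--Leffler-type argument on the degree-$i$ cohomology sheaves.
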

 
 Moreover, in equal characteristic $p$, the sheaves $\Zz_p(i)^{\syn}$ can be compared to the logarithmic de Rham--Witt sheaves. To state this result precisely, we introduce the following notation.
 
 \begin{notation}\label{dRW}
  Let $k$ be a perfect field of characteristic $p$, and let $X$ be a smooth scheme over $k$. 
  
  We denote by $$W\Omega^\bullet=\lim_n W_n\Omega^\bullet \;\;\; \text{ on } \;\; X_{\pet}$$ the \textit{de Rham--Witt complex} of Bloch--Deligne--Illusie of $X$, \cite[\S I.1]{Illusie}, regarded as a complex of pro-étale sheaves on $X$. It comes equipped with a Frobenius operator $F$ and a Verschiebung operator $V$ satisfying $FV=VF=p$.
  We denote by $$W\Omega_{\log}^\bullet=\lim_n W_n\Omega_{\log}^\bullet \;\;\; \text{ on } \;\; X_{\pet}$$ the \textit{logarithmic de Rham--Witt complex} of $X$, \cite[\S II.5.7]{Illusie}, fitting in the following exact sequence of complexes of pro-étale sheaves on $X$ 
  $$0\to W\Omega_{\log}^\bullet\longrightarrow W\Omega^\bullet\overset{F-1}{\longrightarrow}W\Omega^\bullet\to 0,$$
  \cite[I, Théorème 5.7.2]{Illusie}. 
 \end{notation}

 \begin{rmk}
  We recall from  \cite[Proposition 8.4]{BMS2} that we have isomorphisms $$W\Omega^i\cong R\lim_n W_n\Omega^i, \;\;\;\;\;\;\;\;\; W\Omega_{\log}^i\cong R\lim_n W_n\Omega_{\log}^i$$
  on $X_{\pet}$.
 \end{rmk}

 We will use several times the following result of Illusie.
 
 \begin{lemma}[{\cite[I, Corollaire 5.7.5]{Illusie}}]\label{logmod}
  Let $k$ be a perfect field of characteristic $p$, and let $X$ be a smooth scheme over $k$. For all $n\in \Zz_{\ge 1}$ the natural map
  $$W\Omega_{\log}^i /p^n \to W_n\Omega_{\log}^i$$
  is an isomorphism of pro-étale sheaves on $X$.
 \end{lemma}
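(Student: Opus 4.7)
The plan is to deduce the isomorphism from a fundamental short exact sequence for the pro-system $\{W_m\Omega_{\log}^i\}_m$ and to pass to the inverse limit.

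First, I would establish, following the étale-local analysis in \cite[I, \S 5.7]{Illusie}, the short exact sequence of étale sheaves on $X$
\begin{equation*}
0 \to W_m\Omega_{\log}^i \xrightarrow{p^n} W_{m+n}\Omega_{\log}^i \to W_n\Omega_{\log}^i \to 0
\end{equation*}
for all $m, n \in \Zz_{\ge 1}$. By dévissage in $n$, this reduces to the case $n=1$, where one uses the étale-local presentation of $W_{m+1}\Omega_{\log}^i$ by symbols of the form $d\log[x_1]\wedge\cdots\wedge d\log[x_i]$, together with the identities $p=FV=VF$ on the de Rham--Witt complex and the realization of $W_n\Omega_{\log}^i$ as the $F$-fixed subsheaf of $W_n\Omega^i$, to verify both injectivity of multiplication by $p$ and the identification of its cokernel with $W_1\Omega_{\log}^i$.

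Second, I would pass to the inverse limit over $m$. Since the transition maps $W_{m+1}\Omega_{\log}^i \twoheadrightarrow W_m\Omega_{\log}^i$ are surjective on the étale site, the inverse system is Mittag--Leffler, and in particular $R^1\lim_m W_m\Omega_{\log}^i$ vanishes. Taking $\lim_m$ of the displayed sequence then yields
\begin{equation*}
0 \to W\Omega_{\log}^i \xrightarrow{p^n} W\Omega_{\log}^i \to W_n\Omega_{\log}^i \to 0,
\end{equation*}
which gives the desired natural identification $W\Omega_{\log}^i/p^n \cong W_n\Omega_{\log}^i$. The corresponding statement on the pro-étale site follows by pullback along the natural morphism $X_{\pet}\to X_{\ett}$, since the formation of quotients by $p^n$ commutes with this pullback.

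The main obstacle is the first step: the injectivity of multiplication by $p$ on $W_{m+1}\Omega_{\log}^i$ is a rather subtle rigidity property of the logarithmic subsheaf, and notably the analogous statement for the full de Rham--Witt sheaf $W_{m+1}\Omega^i$ fails in general. It is here that one must leverage the precise Witt-vector arithmetic together with the compatibility between $F$, $V$, and the $d\log$ symbols; this is the technical heart of Illusie's argument, and any alternative route would have to reprove it in some form.
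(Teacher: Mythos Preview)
The paper does not give its own proof of this lemma; it is stated with a direct citation to \cite[I, Corollaire 5.7.5]{Illusie} and used as a black box. So there is no proof in the paper to compare against.

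That said, your outline is a correct reconstruction of Illusie's argument. The short exact sequence
\[
0 \to W_m\Omega_{\log}^i \xrightarrow{\underline{p}^n} W_{m+n}\Omega_{\log}^i \to W_n\Omega_{\log}^i \to 0
\]
is precisely the content of \cite[I, Corollaire 5.7.5]{Illusie} (so in a sense you are reproving what is being cited rather than deducing the lemma from something prior), and your dévissage to $n=1$ together with the symbol calculus is the standard route. One small point: your Mittag--Leffler justification for the vanishing of $R^1\lim_m$ is phrased for abelian groups, but here we need it for sheaves. On the pro-étale site this is fine because the topos is replete, so surjective inverse systems have vanishing higher $R\lim$; this is also exactly what the paper records separately in the remark citing \cite[Proposition 8.4]{BMS2}, namely $W\Omega_{\log}^i \simeq R\lim_n W_n\Omega_{\log}^i$. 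With that in hand, your passage to the limit goes through.
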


 In the following, we regard $\Zz_p(i)^{\syn}$ on the pro-étale site $X_{\pet}$.

 \begin{theorem}[{\cite[Corollary 8.21]{BMS2}}]\label{B3}
  Let $k$ be a perfect field of characteristic $p$,  and let $X$ be a smooth scheme over $k$. Then, there is a natural isomorphism of sheaves of complexes on the pro-étale site $X_{\pet}$
  \begin{equation}\label{c1}
   \Zz_p(i)^{\syn}\simeq W\Omega_{\log}^i[-i]
  \end{equation}
  and, for all $n\in \Zz_{\ge 1}$, there is a natural isomorphism
  \begin{equation}\label{c2}
   \Zz/p^n(i)^{\syn}\simeq W_n\Omega_{\log}^i[-i].
  \end{equation}
 \end{theorem}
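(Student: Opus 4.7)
The plan is to reduce to the mod-$p^n$ statement (\ref{c2}) and then deduce (\ref{c1}) by passage to the derived $p$-adic limit, using $\Zz_p(i)^{\syn} \simeq R\lim_n \Zz/p^n(i)^{\syn}$ by definition together with $W\Omega_{\log}^i \simeq R\lim_n W_n\Omega_{\log}^i$ (the remark preceding Lemma \ref{logmod}). To prove (\ref{c2}), I would work quasisyntomic-locally on $X$: both sides are complexes of quasisyntomic sheaves on $X_{\QSyn}$ --- the left side by its very definition on $\QSyn$, and the right side because $W_n\Omega^i$ is a quasisyntomic sheaf (via the derived de Rham--Witt complex, which agrees with its classical incarnation on the smooth site) and $W_n\Omega_{\log}^i$ is its $F=1$ subobject in Illusie's short exact sequence (Notation \ref{dRW}). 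Hence it suffices to verify the isomorphism on sections over a quasiregular semiperfect $k$-algebra $R$.

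For such an $R$, the Nygaard-completed absolute prismatic cohomology $\widehat\Prism_R$ identifies canonically with the $p$-complete crystalline cohomology $A_{\crys}(R) = D_{W(R^\flat)}(\ker\theta)^{\wedge}_p$; under this identification, the Nygaard filtration corresponds to the divided-power filtration, the Breuil--Kisin twist $\{i\}$ is canonically trivialized in characteristic $p$, and the divided Frobenius $\varphi_i$ on $\Fil_{\cl N}^i$ becomes the rescaled crystalline Frobenius $\varphi/p^i$. A Cartier-type isomorphism then identifies the top graded piece
$$\gr_{\cl N}^i(\widehat\Prism_R\{i\}/p^n) \;\simeq\; W_n\Omega^i(R),$$
intertwining $\varphi_i$ with the de Rham--Witt Frobenius $F$. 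Granted this, the top graded piece contributes, in cohomological degree $i$, the fiber of $F-1$ on $W_n\Omega^i(R)$, which by Illusie's exact sequence (Notation \ref{dRW}) is exactly $W_n\Omega_{\log}^i(R)$ --- matching the target of (\ref{c2}).

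It then remains to argue that the contributions from $\Fil_{\cl N}^{i+1}\widehat\Prism_R\{i\}/p^n$ cancel out, i.e.\ that $\varphi_i-1$ restricts to an equivalence on this deeper part of the filtration. This should follow from the topological nilpotence of $\varphi/p^i$ at filtration levels $\ge i+1$ on the divided-power-filtered $A_{\crys}(R)/p^n$, so that $-\sum_{m\ge 0}\varphi_i^m$ provides a convergent inverse to $\varphi_i-1$. The local isomorphisms then assemble to the global one on $X_{\pet}$ by quasisyntomic descent and the comparison between sheaves on $X_{\QSyn}$ and $X_{\pet}$. I expect the principal obstacle to be this last nilpotence step --- the characteristic-$p$ analogue of the classical Fontaine--Messing estimate --- which requires a careful comparison of the Nygaard and divided-power filtrations on $A_{\crys}(R)$ for $R$ quasiregular semiperfect and a precise tracking of the rescaled Frobenius under passage to the Nygaard completion.
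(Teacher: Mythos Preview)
Your route is the reverse of the paper's. The paper treats (\ref{c1}) as a direct citation of \cite[Corollary 8.21]{BMS2} and then deduces (\ref{c2}) by reducing modulo $p^n$, invoking Lemma~\ref{logmod} to identify $W\Omega_{\log}^i/p^n$ with $W_n\Omega_{\log}^i$ --- a two-line argument once the citation is granted. You instead propose to prove (\ref{c2}) first, via a quasisyntomic-local computation on quasiregular semiperfect $k$-algebras, and then pass to the derived limit for (\ref{c1}); in effect you are unpacking the content of the BMS2 result rather than using it as a black box. This is more laborious but more self-contained, and your outline is essentially the shape of the argument in \cite[\S 8]{BMS2}. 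One technical caution: the Nygaard filtration on $A_{\crys}(R)$ for $R$ quasiregular semiperfect is \emph{not} literally the divided-power filtration --- it is characterized by a Frobenius-divisibility condition, and its graded pieces are computed via the Cartier isomorphism with the conjugate filtration on (derived) de Rham cohomology. Once you substitute that correct description, the remainder of your sketch (the identification of the top graded contribution with $W_n\Omega_{\log}^i$ via Illusie's sequence, and the nilpotence of $\varphi_i$ on $\Fil_{\cl N}^{\ge i+1}$ modulo $p^n$) goes through as you indicate.
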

 \begin{proof}
  The isomorphism (\ref{c1}) is \cite[Corollary 8.21]{BMS2}. Reducing (\ref{c1}) modulo $p^n$, we obtain the isomorphism (\ref{c2}), thanks to Lemma \ref{logmod}.
 \end{proof}

 \subsubsection{\normalfont{\textbf{Rigidity of syntomic cohomology}}}
 Let $i\in \Zz_{\ge 0}$. By \cite[Theorem 5.1, (2)]{AMMN}, one can extend the definition of the \textit{integral syntomic cohomology} $$\Zz_p(i)^{\syn}(R)=R\Gamma_{\syn}(R, \Zz_p(i))$$ to all $p$-adically complete rings $R$, via left Kan extension from finitely generated $p$-adically complete polynomial $\Zz_p$-algebras. \medskip
 
 We can then state the following rigidity result.
 
 \begin{theorem}[{\cite[Theorem 5.2]{AMMN}}]\label{B4}
  Let $(R, I)$ be a henselian pair where $R$ and $R/I$ are $p$-adically complete rings. Then, the homotopy fiber
  $$\fib\left(\Zz_p(i)^{\syn}(R)\to \Zz_p(i)^{\syn}(R/I)\right)$$
  is concentrated in cohomological degrees $\le i$.
 \end{theorem}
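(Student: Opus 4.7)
The plan is to deduce the rigidity from Gabber's classical rigidity for \'etale cohomology, leveraging the identifications of $\Zz_p(i)^{\syn}$ recorded in Theorems \ref{B2} and \ref{B3}.

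I would first reduce to the analogous statement modulo $p$. Both $\Zz_p(i)^{\syn}(R)$ and $\Zz_p(i)^{\syn}(R/I)$ are derived $p$-complete by construction, so their fiber is too; a standard d\'evissage using the triangles $\Zz/p \to \Zz/p^{n+1} \to \Zz/p^n$ and the fact that $R\lim$ of surjective towers of bounded-above objects remains bounded above then reduces the problem to bounding the amplitude of $\fib(\Zz/p(i)^{\syn}(R) \to \Zz/p(i)^{\syn}(R/I))$. Moreover, because the Nygaard-completed prismatic complex modulo $p$ only depends on the mod-$p$ reduction of the ring, I may further replace $(R,I)$ by the henselian pair of $\Ff_p$-algebras $(R/p,\, (I+pR)/pR)$, reducing the statement to an equal-characteristic-$p$ assertion.

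Next, I would invoke Theorem \ref{B3}, which identifies $\Zz/p(i)^{\syn}$ with $W_1\Omega_{\log}^i[-i]$ on smooth $\Ff_p$-algebras, and, via quasisyntomic descent as in \cite{BMS2}, on arbitrary $\Ff_p$-algebras in a suitable derived sense. The problem becomes: for a henselian pair $(A, J)$ of $\Ff_p$-algebras, show that $R\Gamma_{\et}(\Spec A, W_1\Omega_{\log}^i) \to R\Gamma_{\et}(\Spec A/J, W_1\Omega_{\log}^i)$ is a quasi-isomorphism in positive degrees. Since $W_1\Omega_{\log}^i$ sits in an Artin--Schreier-type short exact sequence of \'etale sheaves, the outer terms of which are representable (after sheafification) by ind-smooth affine group schemes, this should follow from Gabber's classical rigidity theorem for the \'etale cohomology of smooth affine group schemes over henselian pairs.

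The main obstacle I anticipate is extending the identification in Theorem \ref{B3} beyond the smooth case. The formula $\Zz/p(i)^{\syn} \simeq W_1\Omega_{\log}^i[-i]$ is clean on smooth $\Ff_p$-algebras, but the rings appearing after the reductions above are typically very far from smooth. Reconciling the left Kan extension definition of $\Zz_p(i)^{\syn}$ from finitely generated polynomial $\Zz_p$-algebras with a derived logarithmic de Rham--Witt interpretation requires careful handling of quasisyntomic descent and of the derived Frobenius, and is likely the technical heart of any such argument; alternatively, one could attempt to bypass this by importing rigidity directly from the motivic filtration on topological cyclic homology, where Dundas--Goodwillie--McCarthy-type rigidity for $K$-theory is available.
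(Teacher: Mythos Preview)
The paper does not prove this theorem; it is imported from \cite[Theorem~5.2]{AMMN} and used as a black box (immediately, in Corollary~\ref{B5}). There is no argument here to compare yours against.

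On your sketch itself: the reductions modulo $p$ and to characteristic $p$ are fine, and match the opening of \cite{AMMN}. The step that breaks is the appeal to Gabber rigidity applied to the Artin--Schreier/Cartier sequence for $\Omega^i_{\log}$. The outer terms $Z\Omega^i$ and $\Omega^i$ are coherent sheaves that \emph{vary with the ring}; they are not the pullback of a fixed \'etale sheaf or group scheme along $\iota:\Spec(A/J)\hookrightarrow\Spec A$, since $\Omega^i_{A/J}$ is not $\iota^*\Omega^i_A$ (and likewise for $\Omega^i_{\log}$). Gabber's affine proper base change only compares $R\Gamma_{\ett}(\Spec A,\cl F)$ with $R\Gamma_{\ett}(\Spec(A/J),\iota^*\cl F)$ for a \emph{single} torsion sheaf $\cl F$, so it does not deliver the comparison you want. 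This gap is independent of, and already present before, the smoothness obstacle you flag in your last paragraph.

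Your closing sentence is in fact the main line: the argument in \cite{AMMN} passes through the identification of $\Zz_p(i)^{\syn}$ with an associated graded of the motivic filtration on topological cyclic homology, together with rigidity for $\mathrm{TC}$ with finite coefficients over henselian pairs (which ultimately rests on Gabber's rigidity for $K$-theory and the Dundas--Goodwillie--McCarthy theorem). You located the right input but relegated it to a fallback rather than the spine of the proof.
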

 
 As an immediate consequence, we have the following. We will switch from algebraic to geometric notation to denote syntomic cohomology.
 
 \begin{cor}\label{B5}
  Let $\fr X=\Spf(R)$ be an affine formal scheme over $\cl O_C$. Denote by $k$ the residue field of $C$. Then, the homotopy fiber
  $$\fib\left(\Zz_p(i)^{\syn}(\fr X)\to \Zz_p(i)^{\syn}(\fr X_{k})\right)$$
  is concentrated in cohomological degrees $\le i$.
 \end{cor}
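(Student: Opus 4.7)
The plan is to derive the statement as a direct application of Theorem \ref{B4} to the pair $(R, I)$, where $I := \fr m_C R$ is the ideal cutting out the closed immersion $\fr X_k \hookrightarrow \fr X$. For affine $\fr X = \Spf(R)$ we have $\fr X_k = \Spec(R \otimes_{\cl O_C} k) = \Spec(R/I)$, and both $R$ and $R/I$ are $p$-adically complete: the first by hypothesis, and the second trivially, since it is a $k$-algebra.

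The key point is to verify that $(R, I)$ is a henselian pair. Since $R$ is $p$-adically complete, the pair $(R, pR)$ is henselian by the classical fact that a ring complete along an ideal is henselian along that ideal. To pass from $pR$ to the larger ideal $I$, I would show that $\sqrt{pR} = \sqrt{I}$ inside $R$, and then invoke the general fact that the henselianity of a pair $(A, J)$ depends only on the radical of $J$. For the equality of radicals: $\cl O_C$ is a rank-one valuation ring, so its unique nonzero prime ideal is $\fr m_C$; therefore any prime $\fr p \subseteq R$ containing $pR$ contracts to $\fr m_C$ in $\cl O_C$, and hence contains $I = \fr m_C R$. This shows $V(pR) = V(I)$ inside $\Spec(R)$, so $\sqrt{pR} = \sqrt{I}$ as desired.

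With the henselianity of $(R, I)$ established, Theorem \ref{B4} immediately yields that the homotopy fiber of the map $\Zz_p(i)^{\syn}(R) \to \Zz_p(i)^{\syn}(R/I)$ is concentrated in cohomological degrees $\le i$. Since $\fr X = \Spf(R)$ and $\fr X_k = \Spec(R/I)$ are both affine, these global sections identify with $\Zz_p(i)^{\syn}(\fr X)$ and $\Zz_p(i)^{\syn}(\fr X_k)$ respectively, giving the corollary. The only non-formal input is the henselianity verification, which ultimately relies on $\cl O_C$ being a rank-one valuation ring; every other step is a direct translation.
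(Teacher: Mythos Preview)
Your proof is correct and follows essentially the same route as the paper's: both reduce to Theorem \ref{B4} applied to the pair $(R, \fr m_C R)$, and both verify henselianity by leveraging that $(R, pR)$ is henselian via $p$-adic completeness. The only cosmetic difference is in the passage from $pR$ to $\fr m_C R$: the paper invokes the tower criterion \cite[Tag 0DYD]{Thestack} together with the observation that $\fr m_C R/p$ is locally nilpotent, while you phrase the same content as $\sqrt{pR}=\sqrt{\fr m_C R}$ and use that henselianity depends only on the radical---these two formulations unwind to the same argument.
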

 \begin{proof}
   Denote by $\mathfrak m_C$ the maximal ideal of $\cl O_C$, so that $k=\cl O_C/\mathfrak m_C$ is the residue field of $C$. We observe that the pair $(R, \mathfrak m_C R)$ is henselian: by \cite[Tag 0DYD]{Thestack}, we can reduce to checking that $(R, pR)$ and $(R/p, \mathfrak m_C R/p)$ are henselian pairs; for the first one, by \cite[Tag 0ALJ]{Thestack}, it suffices to note that $R$ is $p$-adically complete; for the second pair, we observe that the ideal $\mathfrak m_CR/p$ of $R/p$ is locally nilpotent, and then we apply \cite[Tag 0ALI]{Thestack}. Thus, the statement follows by applying Theorem \ref{B4} to the henselian pair $(R, \mathfrak m_C R)$.
 \end{proof}

 We are ready to prove the main result of this section.
 
 \begin{proof}[Proof of Theorem \ref{comparison}] Considering the short exact sequences 
 \begin{equation*}
  0 \to H^i_{\pet}(\fr X_C, \Zz_p(i))/p^n\to H^i_{\pet}(\fr X_C, \Zz/p^n(i)) \to H^{i+1}_{\pet}(\fr X_C, \Zz_p(i))[p^n]\to 0
 \end{equation*}
 and 
 \begin{equation*}
  0 \to H^i(\Zz_p(i)^{\syn}(\fr X))/p^n\to H^i(\Zz/p^n(i)^{\syn}(\fr X)) \to H^{i+1}(\Zz_p(i)^{\syn}(\fr X))[p^n]\to 0,
 \end{equation*}
 using Theorem \ref{B2}, we deduce that we have a natural isomorphism
 \begin{equation}\label{pet-syn}
  H^{i+1}_{\pet}(\fr X_C, \Zz_p(i))[p^n]\cong H^{i+1}(\Zz_p(i)^{\syn}(\fr X))[p^n].
 \end{equation}
 On the other hand, by Corollary \ref{B5}, we have a natural isomorphism
 \begin{equation}\label{syn-syn}
  H^{i+1}(\Zz_p(i)^{\syn}(\fr X))\cong H^{i+1}(\Zz_p(i)^{\syn}(\fr X_k)).
 \end{equation}
 Therefore, combining (\ref{pet-syn}), (\ref{syn-syn}), and using Theorem \ref{B3}, we deduce that
 \begin{equation}\label{almost}
  H^{i+1}_{\pet}(\fr X_C, \Zz_p(i))[p^n]\cong H^{1}(\fr X_k, W\Omega_{\log}^i)[p^n].
 \end{equation}
 We obtain the statement observing that, as $C$ is algebraically closed, we can ignore the twist $(i)$ on the left hand side of (\ref{almost}).
 \end{proof}

 \section{Preliminaries on log de Rham--Witt cohomology}\label{pr}
 
 \begin{notation}
  In this section, we denote by $k$ an algebraically closed field of characteristic $p$.
 \end{notation}

 Our goal here is to collect a number of known results on the logarithmic de Rham--Witt cohomology of smooth proper schemes over $k$ that will be useful in the next section, when combined with Theorem \ref{comparison}.  \medskip
 
 We start with the following results of Illusie--Raynaud on the structural properties of the logarithmic de Rham--Witt cohomology.
  
 \begin{lemma}[{\cite[IV, Corollaire 3.5(a)]{IR}}]\label{short-log}
   Let $X$ be a smooth proper scheme over $k$. For all $i, j\in \Zz$, we have a short exact sequence 
    \begin{equation*}
    0\to H^j(X, W\Omega_{\log}^i)\longrightarrow H^j(X, W\Omega^i)\overset{F-1}{\longrightarrow} H^j(X, W\Omega^i)\to 0.
   \end{equation*}
 \end{lemma}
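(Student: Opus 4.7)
The plan is to extract the claimed short exact sequence as a piece of the long exact cohomology sequence associated to the pro-étale sheaf exact sequence
$$0 \to W\Omega_{\log}^i \to W\Omega^i \xrightarrow{F-1} W\Omega^i \to 0$$
recalled in Notation \ref{dRW}. Thus the only point requiring proof is the surjectivity of the map $F-1 : H^j(X, W\Omega^i) \to H^j(X, W\Omega^i)$ for every $j \in \Zz$.

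First I would pass to finite levels using $W\Omega^i \cong R\lim_n W_n\Omega^i$: the derived-limit spectral sequence expresses $H^j(X, W\Omega^i)$ as an extension of $\lim_n H^j(X, W_n\Omega^i)$ by $\lim^1_n H^{j-1}(X, W_n\Omega^i)$, and by Illusie's finiteness theorem each $H^j(X, W_n\Omega^i)$ is finitely generated over $W_n(k)$. The genuine subtlety is that $H^j(X, W\Omega^i)$ is typically \emph{not} finitely generated over $W(k)$ because of $V$-torsion accumulating across levels, so one cannot simply reduce modulo $p$ and invoke Artin--Schreier on a finite-rank module.

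The decisive input is the Illusie--Raynaud structure theorem for the cohomology of the de Rham--Witt complex viewed as a module over the Cartier--Dieudonné--Raynaud algebra: after separating off a piece on which $V$ acts topologically nilpotently (on which $F$ is then also topologically nilpotent, so $F-1$ is automatically invertible via the formal geometric series $-\sum_{m \ge 0} F^m$), the remaining piece is a profinitely generated Dieudonné-type $W(k)$-module equipped with a $\sigma$-linear Frobenius. On this remainder, surjectivity of $F-1$ follows from Lang's theorem: since $k$ is algebraically closed, Artin--Schreier is exact on each finite-length $W(k)$-quotient, and a standard Mittag--Leffler argument using the profiniteness promotes this to the full limit.

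The main obstacle is precisely controlling this passage from the finite levels to the limit: the interplay between the $V$-adic topology on cohomology, the $\sigma$-linear Frobenius, and the potentially nonzero $\lim^1$ contributions is exactly what the Illusie--Raynaud coherence theory is designed to handle, and without it one cannot even a priori exclude that the long exact sequence degenerates into something strictly longer than the claimed three-term sequence.
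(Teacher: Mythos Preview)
The paper does not supply its own proof of this lemma; it simply records the statement and cites \cite[IV, Corollaire 3.5(a)]{IR}. So there is nothing in the paper to compare your argument against beyond the fact that the paper, like you, defers to the Illusie--Raynaud structure theory.

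Your overall plan is the right one and is indeed what Illusie--Raynaud do: take the long exact sequence attached to $0 \to W\Omega_{\log}^i \to W\Omega^i \xrightarrow{F-1} W\Omega^i \to 0$ and reduce to surjectivity of $F-1$ on each $H^j(X, W\Omega^i)$; then invoke the coherence theory for the Raynaud ring together with the hypothesis that $k$ is algebraically closed. Your final paragraph is accurate: the delicate point is exactly the passage from finite levels to the limit, and this is handled by their coherence results.

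One step of your sketch is not correct as stated. You write that on the piece where $V$ is topologically nilpotent, $F$ is then also topologically nilpotent. That implication fails already on $W(k)$ itself, where $V = p\sigma^{-1}$ is topologically nilpotent but $F = \sigma$ is bijective. More to the point, in the Illusie--Raynaud d\'evissage the ``domino'' pieces are not characterized by $V$ being topologically nilpotent; they are $k$-vector spaces sitting in two adjacent degrees with $V$ \emph{bijective} on one graded piece and $F$ \emph{bijective} on the other, linked by $d$. So neither the hypothesis nor the conclusion of that sentence matches the actual structure theory, and the geometric-series inversion of $F-1$ is not what is used there. A related subtlety you gloss over: at finite level $F$ is a map $W_{n+1}\Omega^i \to W_n\Omega^i$, not an endomorphism of $W_n\Omega^i$, so ``apply Lang on each $H^j(X, W_n\Omega^i)$'' is not literally available without first reorganizing via the pro-system or the canonical filtration. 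None of this is fatal---the correct argument still runs through the coherent $R$-module structure plus Lang over an algebraically closed $k$, exactly as you say in your last paragraph---but the middle paragraph would need to be rewritten to reflect the actual shape of the d\'evissage.
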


  \begin{rmk}
   Given  a smooth proper scheme $X$ over $k$, in \cite[\S IV.3]{IR} the cohomology group  $H^j(X, W\Omega_{\log}^i)$ denotes $\lim_n H^j( X, W_n\Omega_{\log}^i)$. However, it is also shown in the proof of \cite[IV, Corollaire 3.5(a)]{IR}, that the inverse system $\{H^{j-1}( X, W_n\Omega_{\log}^i)\}_n$ is Mittag-Leffler, therefore, in our notation $H^j(X, W\Omega_{\log}^i):=H^j(R\Gamma(X, W\Omega_{\log}^i))$, we have $H^j( X, W\Omega_{\log}^i)\overset{\sim}{\to} \lim_n H^j(X, W_n\Omega_{\log}^i)$.
  \end{rmk}

 \begin{lemma}[{\cite{IR}}]\label{sin}  Let $X$ be a smooth proper scheme over $k$. Let $i, j\in \Zz$.
 \begin{enumerate}[(i)]
  \item\label{sin1}  We have a natural (in $X$) short exact sequence
 \begin{equation*}\label{dev}
    0\to M^{ij}(X)\to H^j( X, W\Omega_{\log}^i)\to N^{ij}(X)\to 0 
   \end{equation*}
  where $N^{ij}(X)$ is a finitely generated $\Zz_p$-module and $M^{ij}( X)$ is killed by a power of $p$.
  \item\label{sin2} The following facts are equivalent:
  \begin{enumerate}[(a)]
   \item $M^{ij}(X)=0$,
   \item $d: H^j(X, W\Omega^{i-1})\to H^j(X, W\Omega^i)$ is the zero map,
   \item $H^j( X, W\Omega^i)^{V=0}$ is finite-dimensional over $k$, where $V$ denotes the Verschiebung operator.
 \end{enumerate}
 \item\label{sin3} The conditions in (ii) are satisfied for $j=0, 1$.
  \end{enumerate}
 \end{lemma}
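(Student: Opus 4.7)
The plan is to derive all three parts from Illusie--Raynaud's structural theory of de Rham--Witt cohomology as a Raynaud module (\cite[II]{IR}). For part (i), I would start from Lemma \ref{short-log}, which identifies $H^j(X, W\Omega_{\log}^i)$ with the kernel of $F-1$ acting on $H^j(X, W\Omega^i)$. By Illusie--Raynaud's classification, the Dieudonn\'e--Cartier module $H^j(X, W\Omega^i)$ decomposes, up to finite-length pieces (the ``dominoes,'' annihilated by a power of $p$), as the direct sum of a finitely generated $W(k)$-module and a $V$-adically complete profinite piece on which $F$ is topologically nilpotent. Taking $F=1$ fixed points, the first summand yields the finitely generated $\Zz_p$-module $N^{ij}(X)$; the profinite piece contributes trivially since $F-1$ is then invertible (its inverse being given by the convergent series $-(1+F+F^2+\cdots)$); and the dominoes produce the bounded-torsion group $M^{ij}(X)$.

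For part (ii), the equivalence (a) $\Leftrightarrow$ (c) is built into the structural definition: on the finite-type summand of $H^j(X, W\Omega^i)$, the operator $V$ is (essentially) injective, so the infinite-dimensional $k$-vector space structure of $H^j(X, W\Omega^i)^{V=0}$ is contributed entirely by the dominoes, which by construction are infinite-dimensional $V$-torsion objects. The equivalence (a) $\Leftrightarrow$ (b) is the content of the Illusie--Raynaud description of dominoes at bidegree $(i,j)$ as being generated by the image of the differential $d : H^j(X, W\Omega^{i-1}) \to H^j(X, W\Omega^i)$ restricted to the profinite part of the source (so $d$ becomes ``coherent'' exactly when the source profinite piece is transported to the $V$-torsion of the target); when the full $d$ vanishes at bidegree $(i-1, j)$, no such dominoes can arise, and conversely.

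Finally, (iii) is precisely \cite[II, Corollaire 3.11]{IR}, the result already singled out in the Acknowledgments as the key input, which shows that in cohomological degrees $j = 0, 1$ no dominoes can occur at any bidegree $(i, j)$ (for $j=0$ there is no nontrivial incoming profinite piece to worry about, and for $j=1$ one uses the specific low-degree analysis of \emph{loc.\ cit.}). The main obstacle in this plan is the precise identification between the abstract ``domino'' components of the Raynaud-module decomposition and the concrete boundary map $d$, as well as the matching with the $V$-torsion locus; this bookkeeping is the technical content of \cite[II]{IR}, and once granted, all three assertions of the lemma follow mechanically from the exact sequence of Lemma \ref{short-log} by tracking each piece separately under $\ker(F-1)$.
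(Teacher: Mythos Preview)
Your proposal is correct and follows essentially the same approach as the paper, which likewise derives all three parts by direct citation to Illusie--Raynaud's structure theory (specifically \cite[IV, Th\'eor\`eme 3.3(b), Corollaire 3.5(b) \& II, Corollaire 3.8]{IR} for (i)--(ii), and \cite[II, Corollaire 3.11]{IR} for (iii) with $j=1$). The only minor divergence is that for $j=0$ in part (iii) the paper invokes \cite[II, Corollaire 2.17]{Illusie} (showing $H^0(X, W\Omega^i)$ is already finite free over $W(k)$, hence condition (c) holds trivially) rather than subsuming this case under \cite[II, Corollaire 3.11]{IR} as you do.
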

 \begin{proof}
  Part \listref{sin1} and \listref{sin2} follow from  \cite[IV, Théorème 3.3(b), Corollaire 3.5(b) \& II, Corollaire 3.8]{IR}. For part \listref{sin3}, we claim that, for $j= 0, 1$, we have that $H^j( X, W\Omega^i)^{V=0}$ is finite-dimensional over $k$. In fact, for $j=0$,  by \cite[II, Corollaire 2.17]{Illusie}, we even have that $H^0(X, W\Omega^i)$ is a finite free module  over $W(k)$. For $j=1$, this is the content of (the proof of) \cite[II, Corollaire 3.11]{IR}.
 \end{proof}

 \begin{rmk}
   We note that some of the previous results were generalized to the log-smooth case by Lorenzon, \cite{Lorenzon}.

 \end{rmk}
 
 Let us now collect some consequences. For part \listref{IR1} of the result below, see also \cite[Lemma A.2]{Petrov}.

 \begin{cor}[\cite{IR}]\label{IR}
  Let $ X$ be a smooth proper scheme over $k$. Let $i\in \Zz$.
 \begin{enumerate}[(i)]
  \item\label{IR1} For all $j\in \Zz$, we have an isomorphism of $\Zz_p$-modules $$H^j(X, W\Omega_{\log}^i)\cong \Zz_p^{\oplus r}\oplus T$$ for some $r\in \Zz_{\ge 0}$, with $T$ killed by a power of $p$. 
  \item\label{IR2} For $j=0, 1$, the cohomology group $H^j(X, W\Omega_{\log}^i)$ is a finitely generated $\Zz_p$-module, and for $j=0$ it is also a free $\Zz_p$-module.
 \end{enumerate}
  \end{cor}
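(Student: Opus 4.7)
The plan is to extract both parts directly from Lemma \ref{sin}, combined with the short exact sequence of Lemma \ref{short-log} to upgrade torsion-freeness to freeness when $j=0$.

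For part \listref{IR1}, I would begin with the short exact sequence
\[
0 \to M^{ij}(X) \to H^j(X, W\Omega_{\log}^i) \to N^{ij}(X) \to 0
\]
of Lemma \ref{sin}\listref{sin1}. Since $N^{ij}(X)$ is finitely generated over $\Zz_p$, the structure theorem for finitely generated modules over a PID produces a decomposition $N^{ij}(X)\cong \Zz_p^{\oplus r}\oplus T_N$ with $T_N$ finite and killed by a power of $p$. Composing the surjection $H^j(X, W\Omega_{\log}^i) \twoheadrightarrow N^{ij}(X)$ with the projection to $\Zz_p^{\oplus r}$ gives a surjection $H^j(X, W\Omega_{\log}^i)\twoheadrightarrow \Zz_p^{\oplus r}$ whose kernel $T$ sits in a short exact sequence
\[
0\to M^{ij}(X)\to T\to T_N\to 0.
\]
Both $M^{ij}(X)$ and $T_N$ are killed by powers of $p$, so $T$ is as well; and the surjection $H^j(X, W\Omega_{\log}^i)\twoheadrightarrow \Zz_p^{\oplus r}$ splits because $\Zz_p^{\oplus r}$ is free, yielding the desired isomorphism.

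For part \listref{IR2}, Lemma \ref{sin}\listref{sin3} gives $M^{ij}(X)=0$ for $j=0,1$, so in these degrees $H^j(X, W\Omega_{\log}^i)\cong N^{ij}(X)$ is finitely generated over $\Zz_p$. To obtain freeness when $j=0$, I would invoke Lemma \ref{short-log}, which exhibits $H^0(X, W\Omega_{\log}^i)$ as a $\Zz_p$-submodule of $H^0(X, W\Omega^i)$. The latter is a finite free $W(k)$-module by \cite[II, Corollaire 2.17]{Illusie} (as recalled in the proof of Lemma \ref{sin}\listref{sin3}), hence $\Zz_p$-torsion-free. Thus $H^0(X, W\Omega_{\log}^i)$ is finitely generated and torsion-free over $\Zz_p$, and therefore free.

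There is no genuine obstacle, as the difficult content is already packaged into the Illusie--Raynaud results cited as Lemmas \ref{short-log} and \ref{sin}. The only conceptual subtlety is that Lemma \ref{sin}\listref{sin1} asserts only that $M^{ij}(X)$ is killed by \emph{some} power of $p$, without any finite-generation statement in higher degrees; this is precisely why part \listref{IR1} produces a splitting $\Zz_p^{\oplus r}\oplus T$ rather than a full finitely generated $\Zz_p$-module structure, and why finite generation in part \listref{IR2} is restricted to the two values of $j$ covered by \listref{sin3}.
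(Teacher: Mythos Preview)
Your proof is correct and follows essentially the same route as the paper: part \listref{IR1} via Lemma \ref{sin}\listref{sin1} together with the structure theorem for finitely generated $\Zz_p$-modules, and part \listref{IR2} via Lemma \ref{sin}\listref{sin1}+\listref{sin3} for finite generation, plus the embedding of $H^0(X, W\Omega_{\log}^i)$ into the torsion-free $H^0(X, W\Omega^i)$ (the paper phrases the latter as $H^0(X, W\Omega_{\log}^i)=H^0(X, W\Omega^i)^{F=1}$, which amounts to your use of Lemma \ref{short-log}).
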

  \begin{proof}
   Part \listref{IR1} readily follows from Lemma \ref{sin}\listref{sin1} and the classification of finitely generated $\Zz_p$-modules.
  The first assertion of part \listref{IR2} follows combining part \listref{sin1} and \listref{sin3} of Lemma \ref{sin}. The last assertion follows recalling that $H^0(X, W\Omega_{\log}^i)=H^0( X, W\Omega^i)^{F=1}$ and, by \cite[II, Corollaire 2.17]{Illusie}, $H^0( X, W\Omega^i)$ is a finite free module over $W(k)$.
  \end{proof}
 
 \begin{rmk}
  Keeping the assumptions of Corollary \ref{IR}, by \cite[II, Proposition 5.9]{Illusie}, we also have that the cohomology group $H^2(X, W\Omega_{\log}^1)$ is a finitely generated $\Zz_p$-module. On the other hand, for $X$ a supersingular K3 surface we have that, $H^3(X, W\Omega_{\log}^1)\cong k$, \cite[\S II.7.2]{Illusie}, thus the torsion subgroup $T$ in Corollary \ref{IR}\listref{IR1} can be infinite in general.
 \end{rmk}

 \section{Finiteness of torsion and applications}\label{aff}
 
  \subsubsection{\normalfont{\textbf{Finiteness}}}

 Our first goal here is to prove the following surprising finiteness result, Theorem \ref{sur}, which thanks to Theorem \ref{comparison} can be reduced to a problem in characteristic $p$.

 \begin{theorem}\label{sur}
  Let $\fr X$ be an affine smooth formal scheme over $\cl O_C$. Then, for all $i\in \Zz$, the cohomology group $H^i_{\pet}(\fr X_C, \Zz_p)$ has finite torsion subgroup.
 \end{theorem}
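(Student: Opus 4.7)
The plan is to use Theorem \ref{comparison} to translate the statement into a finiteness question for logarithmic de Rham--Witt cohomology on the special fiber $\fr X_k$, and then to exploit Corollary \ref{IR}\listref{IR2} via a smooth proper compactification.

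First I would note that the torsion subgroup of any $\Zz_p$-module is automatically $p$-primary, so the torsion of $H^i_{\pet}(\fr X_C, \Zz_p)$ equals $\bigcup_n H^i_{\pet}(\fr X_C, \Zz_p)[p^n]$. The case $i=0$ is immediate because $\fr X_C$ is an affinoid, hence has finitely many connected components, so $H^0_{\pet}(\fr X_C, \Zz_p)$ is a finite free $\Zz_p$-module. For $i\ge 1$, Theorem \ref{comparison} (applied with index $i-1$) gives a natural isomorphism
\[
H^i_{\pet}(\fr X_C, \Zz_p)[p^n] \;\cong\; H^1(\fr X_k, W\Omega_{\log}^{i-1})[p^n],
\]
so the task reduces to showing that, writing $X:=\fr X_k$, the $p$-primary torsion of $H^1(X, W\Omega_{\log}^{j})$ is finite for every $j\ge 0$.

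Next I would reduce to a statement about smooth proper $k$-schemes. Choose an open immersion $X \hookrightarrow \bar X$ into a smooth proper scheme over $k$ whose complement is a simple normal crossings divisor $D = \bigcup_a D_a$. Gros' Gysin purity for the sheaves $W\Omega_{\log}^\bullet$ produces a localization spectral sequence whose $E_1$-terms involve the cohomologies $H^q(D_{a_1}\cap \cdots \cap D_{a_r}, W\Omega_{\log}^{j-r})$ of the smooth proper strata and which converges to $H^{p+q}(X, W\Omega_{\log}^j)$. Only finitely many terms with $q\le 1$ enter the computation of $H^1(X, W\Omega_{\log}^j)$, and by Corollary \ref{IR}\listref{IR2} each of them is a finitely generated $\Zz_p$-module (the ones in degree $0$ being even free). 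Assembling these contributions through the spectral sequence shows that $H^1(X, W\Omega_{\log}^j)$ itself is a finitely generated $\Zz_p$-module, and hence has finite torsion, as wanted.

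The principal obstacle is the compactification step: in characteristic $p$, a smooth projective compactification with simple normal crossings boundary is not known to exist in arbitrary dimension. A robust workaround uses de Jong's (or Gabber's) alteration results: one produces a proper surjection $\pi : \bar Y \to \bar X_0$, with $\bar Y$ smooth projective and $\bar X_0$ a possibly singular Nagata compactification of $X$, such that $\pi^{-1}(X)\to X$ is generically prime-to-$p$ étale. A trace argument then bounds the $p$-primary torsion of $H^1(X, W\Omega_{\log}^j)$ in terms of that on an appropriate smooth open subscheme of $\bar Y$, to which the previous paragraph applies. The uniform boundedness of the $p^n$-torsion in $n$ is then automatic, since a finitely generated $p$-power torsion $\Zz_p$-module has bounded exponent.
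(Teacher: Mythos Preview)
Your reduction via Theorem~\ref{comparison} to the finiteness of the torsion in $H^1(\fr X_k, W\Omega_{\log}^{j})$, and the idea of reaching the smooth proper case so as to invoke Corollary~\ref{IR}\listref{IR2}, are exactly what the paper does. The divergence is in how the compactification obstacle is handled.

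Your proposed workaround has a real gap. A prime-to-$p$ alteration $\pi\colon \bar Y \to \bar X_0$ is proper but only \emph{generically} finite \'etale, so the restricted map $\pi^{-1}(X)\to X$ is proper and typically not finite. For such a map there is no evident trace on $H^1(-,W\Omega_{\log}^j)$: the sheaves $W\Omega_{\log}^j$ are not constructible, so the usual trace formalism for \'etale cohomology with torsion coefficients does not apply, and the Gros pushforward for de Rham--Witt cohomology does not obviously restrict to the logarithmic subsheaves. Making this precise would require substantial extra input (e.g.\ transfers on $\Zz/p^n(j)^{\syn}$, or cdh descent for these sheaves) that you do not supply.

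The paper sidesteps this entirely by aiming lower. Instead of bounding $H^1$ directly, it first shows (Proposition~\ref{MM}\listref{MM2}) that $H^0(Y, W_n\Omega_{\log}^j)$ is finite for any smooth $k$-variety $Y$. For this only an \emph{injection} into the compactifiable case is needed, and that comes cheaply: by \cite{BhattSnow} there is an \'etale morphism $X\to Y$ with $X$ admitting a smooth projective SNC compactification, and \'etale descent gives $H^0(Y,W_n\Omega_{\log}^j)\hookrightarrow H^0(X,W_n\Omega_{\log}^j)$; your purity/localization argument then bounds the right-hand side. From here the short exact sequence
\[
0\to H^0(Y, W\Omega_{\log}^j)/p \longrightarrow H^0(Y, \Omega_{\log}^j) \longrightarrow H^1(Y, W\Omega_{\log}^j)[p]\to 0
\]
shows that $M[p]$ is finite for $M:=H^1(Y,W\Omega_{\log}^j)$, and since $M$ is derived $p$-complete the elementary Lemma~\ref{derived-p} upgrades this to $M_{\tor}$ finite. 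No transfers are needed, and one never proves that $H^1(Y,W\Omega_{\log}^j)$ is finitely generated.
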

 \begin{proof}
  The statement follows combining Theorem \ref{comparison} and Proposition \ref{MM}\listref{MM1} below.
 \end{proof}

 The following is the crucial technical result that we use for the proof of Theorem \ref{sur}. We will prove it by reduction to the proper case, via refined alterations and purity of the logarithmic de Rham–Witt sheaves.
 
 \begin{prop}\label{MM}
 Let $k$ be an algebraically closed field of characteristic $p$. Let $Y$ be a smooth variety over $k$.\footnote{Here, a \textit{variety over $k$} is a separated, integral, scheme of finite type over $\Spec k$.} Let $i\in \Zz$.
 \begin{enumerate}[(i)]
    \item\label{MM2} For all $n\in \Zz_{\ge 1}$, the cohomology group $H^0(Y, W_n\Omega_{\log}^i)$ is a finite abelian group.
  \item\label{MM1} The cohomology group $H^1( Y, W\Omega_{\log}^i)$ has finite torsion subgroup.
 \end{enumerate}
 \end{prop}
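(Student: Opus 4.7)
The plan is to reduce both parts of the proposition to the case of a smooth proper variety over $k$, where the conclusion follows from Corollary \ref{IR}. The main step is to construct a finite filtration on $H^j(Y, W\Omega^i_{\log})$ (respectively $H^j(Y, W_n\Omega^i_{\log})$) whose graded pieces are subquotients of cohomology groups of smooth proper varieties in bounded degree. To that end, embed $Y$ via Nagata into a proper $k$-variety $\bar Y$, and apply Gabber's prime-to-$p$ refined alterations (ILO Exp.\ IX; see also Temkin) to obtain a proper surjective generically étale morphism $\pi\colon \bar Y' \to \bar Y$ of some degree $d$ coprime to $p$, with $\bar Y'$ smooth over $k$ and such that $\pi^{-1}(\bar Y \setminus Y)_{\mathrm{red}}$ is supported in a strict normal crossings divisor $D = D_1 \cup \cdots \cup D_r \subset \bar Y'$. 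Set $Y' := \bar Y' \setminus D$. Since $d$ is invertible modulo $p^n$, the standard trace-pullback composition $\pi_* \pi^* = d$ makes $\pi^*$ injective on the $p$-power torsion of the log de Rham--Witt cohomology, so it suffices to prove the proposition for $Y'$; one may thus assume $Y$ admits a smooth compactification $\bar Y$ with SNC complement $D = D_1 \cup \cdots \cup D_r$.

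Next, for any smooth closed immersion $Z \hookrightarrow X$ of smooth $k$-varieties of pure codimension $c$, Gros's purity theorem for the logarithmic de Rham--Witt sheaves (refined by Shiho and by Jannsen--Saito--Zhao) provides an isomorphism $R\underline{\Gamma}_Z\, W\Omega^i_{X, \log} \simeq W\Omega^{i-c}_{Z, \log}[-c]$, and likewise modulo $p^n$; combined with the localization triangle this yields long exact sequences relating $H^\bullet(X, W\Omega^i_{\log})$, $H^\bullet(X \setminus Z, W\Omega^i_{\log})$, and $H^{\bullet - c}(Z, W\Omega^{i-c}_{\log})$. Iterating this over the stratification of $\bar Y$ by intersections $D_{i_1} \cap \cdots \cap D_{i_s}$ (each smooth and proper over $k$, by the SNC hypothesis), one obtains, for every $j \in \Zz$, a finite filtration on $H^j(Y, W\Omega^i_{\log})$ whose graded pieces are subquotients of groups $H^{j'}(Z, W\Omega^{i'}_{\log})$ with $j' \le j$, $i' \le i$, and $Z$ a smooth proper $k$-variety; the same construction applies modulo $p^n$.

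For $j \in \{0, 1\}$, only the groups $H^0$ and $H^1$ of $W\Omega^{i'}_{\log}$ (respectively $W_n\Omega^{i'}_{\log}$) on smooth proper $k$-varieties appear as subquotients. By Corollary \ref{IR}\listref{IR2} these are finitely generated $\Zz_p$-modules; reducing modulo $p^n$ yields finite groups, which gives part \listref{MM2}. For part \listref{MM1}, the torsion subgroup of any log de Rham--Witt cohomology group is automatically $p$-power torsion (since the sheaves involved are derived $p$-complete), and a finite extension of abelian groups each with finite $p$-power torsion again has finite $p$-power torsion, so the filtration above shows that the torsion subgroup of $H^1(Y, W\Omega^i_{\log})$ is finite. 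The main technical input is the purity theorem for the log de Rham--Witt sheaves combined with the iterated Gysin bookkeeping over the SNC strata; the construction of a trace for the alteration compatible with log de Rham--Witt cohomology is routine by standard characteristic-$p$ étale-cohomological arguments but worth a verification.
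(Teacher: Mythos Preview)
Your overall strategy---reduce to smooth proper strata via alterations and Gros--Shiho purity, then invoke Corollary \ref{IR}\listref{IR2}---matches the paper's, but the execution differs in two places, one of which is underjustified.

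For the reduction to an SNC compactification, you use Gabber's prime-to-$p$ alterations together with a trace identity $\pi_*\pi^* = d$ on $W\Omega^i_{\log}$-cohomology. The paper instead uses Bhatt's refinement \cite{BhattSnow}, which produces an \emph{étale} surjection $X \to Y$ with $X$ open in a smooth projective variety with SNC boundary; since $W_n\Omega^i_{\log}$ is an étale sheaf, $H^0(Y,-) \hookrightarrow H^0(X,-)$ for free, and no trace is needed. Your trace along a proper, only generically étale morphism of smooth varieties is not ``routine étale-cohomological arguments'': it requires Gros's proper pushforward for logarithmic Hodge--Witt cohomology (or a detour through the motivic interpretation of $\Zz/p^n(i)^{\syn}$), and the compatibility $\pi_*\pi^* = d$ deserves a precise reference rather than a hand-wave. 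For part \listref{MM2} alone this can be sidestepped (restrict to a dense open where $\pi$ is finite étale, using that $W_n\Omega^i_{\log}\subset W_n\Omega^i$ has injective restriction to dense opens of an integral scheme), but your argument for \listref{MM1} genuinely uses the trace on $H^1$.

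For part \listref{MM1} itself, the paper is considerably shorter and sidesteps both the trace on $H^1$ and the iterated Gysin dévissage. From \listref{MM2} with $n=1$ and the exact sequence
\[
0 \to H^0(Y, W\Omega^i_{\log})/p \to H^0(Y, \Omega^i_{\log}) \to H^1(Y, W\Omega^i_{\log})[p] \to 0
\]
(coming from Lemma \ref{logmod}) one sees that $M[p]$ is finite for $M := H^1(Y, W\Omega^i_{\log})$; since $M$ is derived $p$-complete, Lemma \ref{derived-p} then forces $M_{\tor}$ to be finite. Your filtration over the SNC stratification works in principle, but the recursion through the non-proper intermediate opens $D_s \setminus \bigcup_{t<s} D_t$ (which themselves must be fed back into the induction) is more involved than your sketch indicates, and in any case the paper's reduction of \listref{MM1} to \listref{MM2} is both simpler and avoids the trace issue entirely.
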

 
 \begin{proof}
  Part \listref{MM2} in the case $Y$ is smooth proper over $k$ follows from Corollary \ref{IR}\listref{IR2}, using the short exact sequence
  \begin{equation}\label{nat}
   0\to H^0( Y, W\Omega_{\log}^i)/p^n \to H^0( Y, W_n\Omega_{\log}^i) \to H^1( Y, W\Omega_{\log}^i)[p^n] \to 0,
  \end{equation}
  which follows from Lemma \ref{logmod}.
  
  Next, we want to show that we can reduce to the proper case. Let $Y$ be a smooth variety over $k$. Using a refinement of de Jong's alterations, \cite[Theorem 1.2]{BhattSnow}, there exist an étale morphism $ X\to  Y$ over $k$, and a dense open immersion $ X\hookrightarrow \overline{ X}$ into a smooth projective variety $\overline{ X}$ over $k$ whose complement is a strict normal crossing divisor $D \subset \overline{X}$. Since $W_n\Omega_{\log}^i$ satisfies étale descent, we have that $H^0( Y, W_n\Omega_{\log}^i)$ injects into $H^0(X, W_n\Omega_{\log}^i)$, hence it suffices to prove the statement with $X$ in place of $Y$. For this, we consider the localization exact sequence
  $$H^0_{D}(\overline{ X}, W_n\Omega_{\log}^i)\to  H^0(\overline{ X}, W_n\Omega_{\log}^i)\to H^0( X, W_n\Omega_{\log}^i)\to  H^1_{D}(\overline{ X}, W_n\Omega_{\log}^i)$$
  where $H^{\bullet}_D$ denotes the cohomology with support in the closed subscheme $D \subset  \overline{X}$.
 Then, using the purity of the logarithmic de Rham--Witt sheaves, \cite[II, Théorème 3.5.8, (3.5.19)]{Gros}, \cite[Theorem 3.1]{Shiho}, we deduce the exact sequence
  \begin{equation}\label{anvedi}
   0\to H^0(\overline{ X}, W_n\Omega_{\log}^i)\to H^0(X, W_n\Omega_{\log}^i)\to \bigoplus_{r=1}^m H^0(D_r,  W_n\Omega_{\log}^{i-1})
  \end{equation}
  where we wrote $D=\bigcup_{r=1}^m D_r$ as union of its irreducible components $D_r$.  As $\overline{X}$ and $D_r$, for $r=1, \ldots, m$, are smooth proper varieties over $k$, part \listref{MM2} follows from the previous case and the exact sequence (\ref{anvedi}).
  
  For part \listref{MM1}, let $M:=H^1( Y, W\Omega_{\log}^i)$. By part \listref{MM2} and the exact sequence (\ref{nat}) for $n=1$, we have that $M[p]$ is a finite abelian group. Observing that $M$ is derived $p$-adically complete,\footnote{In fact, $R\Gamma( Y, W\Omega_{\log}^i)=R\lim_n R\Gamma( Y, W_n\Omega_{\log}^i)$ is derived $p$-adically complete, as it is limit of $p^n$-torsion complexes. Hence, we can use \cite[Proposition 3.4.4]{BS}.} we deduce the statement from Lemma \ref{derived-p} below.
 \end{proof}

  We used the following simple lemma.

 \begin{lemma}\label{derived-p}
  Let $M$ be a derived $p$-adically complete $\Zz_p$-module. Suppose that the $p$-torsion submodule $M[p]$ is finite, then the torsion submodule $M_{\tor}=\colim_n M[p^n]$ is finite.
 \end{lemma}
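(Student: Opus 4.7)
The plan is to exploit derived $p$-completeness to rule out infinite $p$-divisibility in $M$, and then to manufacture such divisibility under the hypothesis that $M_{\tor}$ is infinite, arriving at a contradiction. Derived $p$-completeness of $M$ is equivalent to $R\Hom_{\Zz_p}(\Qq_p, M) = 0$; feeding this into the $\Ext$-sequence attached to $0 \to \Zz_p \to \Qq_p \to \Qq_p/\Zz_p \to 0$ forces $\Hom_{\Zz_p}(\Qq_p/\Zz_p, M) = 0$. Writing $\Qq_p/\Zz_p \simeq \colim_n \Zz/p^n$ with transition maps $1 \mapsto p$ and applying $\Hom(-, M)$, this identifies with $\lim_n M[p^n]$, where the transition maps are multiplication by $p$. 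So it suffices to show that if $M_{\tor}$ is infinite, then $\lim_n M[p^n] \ne 0$.

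First I would note that multiplication by $p^{n-1}$ induces an injection $M[p^n]/M[p^{n-1}] \hookrightarrow M[p]$, so by induction $|M[p^n]| \le |M[p]|^n < \infty$ for every $n$. If $M_{\tor} = \bigcup_n M[p^n]$ is infinite, elements of arbitrarily large order must exist, so one can select $x_m \in M$ of order exactly $p^m$ for infinitely many $m$. Each $p^{m-1}x_m$ lies in the finite set $M[p]\setminus\{0\}$, so the pigeonhole principle produces a fixed $y \in M[p]\setminus\{0\}$ realized as $p^{m-1}x_m$ for infinitely many $m$; in particular, $y$ admits a $p^n$-th root in $M$ for every $n \ge 1$.

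The key remaining step is to assemble these isolated $p^n$-th roots of $y$ into a coherent tower $(y_n)_{n \ge 1}$ with $y_n \in M[p^n]$, $y_1 = y$, and $p y_{n+1} = y_n$, which would give the sought nonzero element of $\lim_n M[p^n]$. For this I would introduce the finite sets $S_n := \{z \in M[p^n] : p^{n-1}z = y\}$, which are nonempty by the previous step and are equipped with transition maps $p \colon S_{n+1} \to S_n$. These transitions need not be surjective, so I would pass to the stable images $T_n := \bigcap_{m \ge n} \mathrm{image}(S_m \to S_n)$: finiteness of $S_n$ makes each $T_n$ nonempty (a descending intersection of nonempty finite subsets stabilizes), and a short pigeonhole argument shows the induced maps $T_{n+1} \to T_n$ are surjective. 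Since an inverse limit of a surjective system of nonempty finite sets is nonempty, this produces the required coherent tower and hence the contradiction.

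The main obstacle I foresee is this final compactness step: the inputs are elementary, but one has to verify that $p^n$-th roots of $y$ chosen independently for different $n$ can actually be threaded into a single compatible Pr\"ufer-type sequence, rather than merely satisfying isolated divisibility conditions. The stable-image device supplies precisely this upgrade, and this is where derived $p$-completeness is used most critically.
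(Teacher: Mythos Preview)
Your proof is correct and follows essentially the same route as the paper's: both extract $T_p(M)=\lim_n M[p^n]=0$ from derived $p$-completeness (the paper via $H^{-1}(M^{\wedge})=0$, you via the equivalent characterization $R\Hom_{\Zz_p}(\Qq_p,M)=0$), then combine this with the inductive finiteness of each $M[p^n]$ and a compactness argument to conclude. Your stable-image construction spells out explicitly the step the paper leaves terse when it asserts that the map $M[p^{m+1}]\to M[p]$ is eventually zero.
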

 \begin{proof}
 We denote by $T_p(M):=\lim_n M[p^n]$ the $p$-adic Tate module of $M$, where the transition maps $M[p^n]\to M[p^{n-1}]$ are given by multiplication by $p$. Since $M$ is derived $p$-adically complete (and concentrated in degree 0), by \cite[Tag 0BKG]{Thestack} we have that $$T_p(M)=H^{-1}(M^{\wedge})=H^{-1}(M)=0$$
 where $(-)^{\wedge}$ denotes the derived $p$-adic completion. As $M[p]$ is finite, we deduce that there exists a sufficiently big integer $m$ such that the map $M[p^{m+1}]\to M[p]$, given by multiplication by $p^m$, is the zero map. In other words, $M_{\tor}=M[p^m]$. Now, using the exact sequence $$0\to M[p^{m-1}]\to M[p^{m}]\to M[p]$$
 where the right map is given by multiplication by $p^{m-1}$, arguing by induction we deduce that $M[p^m]$ is finite. Putting everything together, we conclude that $M_{\tor}$ is finite, as desired.
 \end{proof}
 
 \begin{rmk}
  After a first draft of this note was distributed, Alexander Petrov informed us that Proposition \ref{MM}\listref{MM2} for $n=1$ can also be deduced from \cite[Corollaire 2.5.6 (Page 534), Théorème 2.4.2 (Page 528)]{Illusie}. We note also that Proposition \ref{MM}\listref{MM2} for a general $n\in \Zz_{\ge 1}$ can be deduced from the case $n=1$. For this, we can argue by induction on $n$, using the exact sequence of pro-\'etale sheaves on $Y$, which follows from Lemma \ref{logmod}:
  $$0\to W_{n-1}\Omega_{\log}^i\to W_{n}\Omega_{\log}^i\to \Omega_{\log}^i\to 0$$
  where the right map is given by reduction modulo $p$.
 \end{rmk}
 
  We observe the following easy consequence of Theorem \ref{sur}.
 
 \begin{cor}
  Let $\fr X$ be an affine smooth formal scheme over $\cl O_C$. Then, for all $i\in \Zz$, we have a natural isomorphism
   $$H^i_{\pet}(\fr X_C, \Zz_p)\overset{\sim}{\to} \lim_n H^i_{\pet}(\fr X_C, \Zz/p^n).$$ 
 \end{cor}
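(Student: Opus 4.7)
The plan is to deduce this from a Milnor exact sequence, reducing the statement to the vanishing of a certain $R^1\lim$, which will follow from the finiteness of torsion established in Theorem \ref{sur}.

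Since $\Zz_p \simeq R\lim_n \Zz/p^n$ as pro-étale sheaves, we have $R\Gamma_{\pet}(\fr X_C, \Zz_p) \simeq R\lim_n R\Gamma_{\pet}(\fr X_C, \Zz/p^n)$, and taking cohomology yields the usual Milnor short exact sequence
$$0\to R^1\lim_n H^{i-1}_{\pet}(\fr X_C, \Zz/p^n)\to H^i_{\pet}(\fr X_C, \Zz_p)\to \lim_n H^i_{\pet}(\fr X_C, \Zz/p^n)\to 0.$$
Thus it suffices to show that $R^1\lim_n H^{i-1}_{\pet}(\fr X_C, \Zz/p^n) = 0$ for all $i\in \Zz$, i.e.\ that the inverse system $\{H^{i-1}_{\pet}(\fr X_C, \Zz/p^n)\}_n$ satisfies the Mittag-Leffler condition.

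To verify this, I would use the short exact sequence of inverse systems
$$0\to \{H^{i-1}_{\pet}(\fr X_C, \Zz_p)/p^n\}_n\to \{H^{i-1}_{\pet}(\fr X_C, \Zz/p^n)\}_n\to \{H^{i}_{\pet}(\fr X_C, \Zz_p)[p^n]\}_n\to 0$$
and the associated long exact sequence of $\lim / R^1\lim$. The system on the left has surjective transition maps (reduction modulo $p^{n-1}$), so it is trivially Mittag-Leffler and its $R^1\lim$ vanishes. For the system on the right, Theorem \ref{sur} says that the torsion subgroup of $H^{i}_{\pet}(\fr X_C, \Zz_p)$ is finite; hence the ascending chain of subgroups $H^{i}_{\pet}(\fr X_C, \Zz_p)[p^n]$ stabilizes for $n$ large, which immediately implies Mittag-Leffler for this system as well (indeed the transition maps, which are multiplication by $p$, have eventually constant image). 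Therefore the middle system is Mittag-Leffler, its $R^1\lim$ vanishes, and the Milnor sequence collapses to the desired isomorphism.

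The only nontrivial input is the stabilization of the $p^n$-torsion, which is precisely the content of Theorem \ref{sur}; everything else is formal manipulation of Milnor sequences. No extra care is needed at $i = 0$, where both sides agree by inspection.
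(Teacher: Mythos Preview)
Your proof is correct and follows essentially the same route as the paper's: both use the short exact sequence relating $H^{i-1}_{\pet}(\fr X_C,\Zz_p)/p^n$, $H^{i-1}_{\pet}(\fr X_C,\Zz/p^n)$, and $H^{i}_{\pet}(\fr X_C,\Zz_p)[p^n]$, invoke Theorem \ref{sur} to control the right-hand tower, and conclude via Mittag-Leffler that the relevant $R^1\lim$ vanishes. Your write-up is simply more explicit about the Milnor sequence and the transition maps than the paper's, which compresses the same argument into two sentences.
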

 \begin{proof}
 Considering the short exact sequence
 \begin{equation*}
  0 \to H^{i-1}_{\pet}(\fr X_C, \Zz_p)/p^n\to H^{i-1}_{\pet}(\fr X_C, \Zz/p^n) \to H^{i}_{\pet}(\fr X_C, \Zz_p)[p^n]\to 0,
 \end{equation*}
 by Theorem \ref{sur} and the Mittag-Leffler criterion, \cite[Proposition 13.2.2]{Groth}, we have that $$R^1\lim_n H^{i-1}_{\pet}(\fr X_C, \Zz/p^n)=0.$$ This implies the statement.
 \end{proof}

 \subsubsection{\normalfont{\textbf{$p$-torsion in the Picard group}}}
 
 Let us now collect another interesting consequence of Proposition \ref{MM}\listref{MM2}. A proof of the following result was sketched in \cite[Lemma 2.8]{PicTors}, assuming resolution of singularities in characteristic $p$.
 
 \begin{prop}\label{pictor}
  Let $k$ be an algebraically closed field of characteristic $p$, and let $Y$ be a smooth variety over $k$. Then, for all $n\in \Zz_{\ge 0}$,
  \begin{equation*}\label{pic}
   \Pic(Y)[p^n] \text{ is finite. }
  \end{equation*}
 \end{prop}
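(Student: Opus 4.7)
The plan is to exhibit $\Pic(Y)[p^n]$ as a quotient of $H^0(Y, W_n\Omega_{\log}^1)$ and then invoke Proposition \ref{MM}\listref{MM2} with $i = 1$. The mechanism is the Kummer-type short exact sequence on the small \'etale site $Y_\ett$ which identifies $W_n\Omega_{\log}^1$ with the \'etale sheafification of $U \mapsto \cl O(U)^\times / \cl O(U)^{\times p^n}$.

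The case $n = 0$ is vacuous, so I assume $n \ge 1$. First I would set up the short exact sequence of \'etale abelian sheaves on $Y$
$$0 \to \Gg_m \xrightarrow{\;[p^n]\;} \Gg_m \xrightarrow{\;d\log\;} W_n\Omega_{\log}^1 \to 0,$$
with $[p^n]$ the $p^n$-th power map. Injectivity on the left is immediate, since in any reduced $k$-algebra the relation $x^{p^n} = 1$ forces $x = 1$; exactness at the middle and right amounts to the identification of $\ker(d\log)$ with the \'etale image $\Gg_m^{p^n}$ of $[p^n]$, which is Cartier's theorem for $n = 1$ and the theorem of Bloch--Gabber--Kato on Milnor $K$-theory modulo $p^n$ in general (compatible with \cite[I, Th\'eor\`eme 5.7.2]{Illusie}).

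Next I would pass to the long exact sequence in \'etale cohomology. Using Hilbert 90 to write $\Pic(Y) = H^1(Y_\ett, \Gg_m)$, and observing that the endomorphism of $H^1(Y_\ett, \Gg_m)$ induced by $[p^n]$ is multiplication by $p^n$ on $\Pic(Y)$, the relevant segment reads
$$H^0(Y, W_n\Omega_{\log}^1) \longrightarrow \Pic(Y) \xrightarrow{\;p^n\;} \Pic(Y),$$
so that $\Pic(Y)[p^n]$ is a quotient of $H^0(Y, W_n\Omega_{\log}^1)$. Finiteness then follows directly from Proposition \ref{MM}\listref{MM2}.

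The main obstacle has really already been handled upstream: it is Proposition \ref{MM}\listref{MM2}, whose proof required reduction to the smooth proper case via the refined alterations of \cite{BhattSnow} together with purity for the logarithmic de Rham--Witt sheaves. Compared to the sketch in \cite[Lemma 2.8]{PicTors}, this route sidesteps the use of resolution of singularities in characteristic $p$, and the remaining cohomological bookkeeping is formal.
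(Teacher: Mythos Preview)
Your proof is correct and follows the same strategy as the paper: both show that $\Pic(Y)[p^n]$ is a quotient of $H^0(Y, W_n\Omega_{\log}^1)$ and then invoke Proposition~\ref{MM}\listref{MM2}. The only cosmetic difference is that the paper derives the surjection by combining \cite[Proposition~7.17]{BMS2} with Theorem~\ref{B3}, whereas you go directly through the classical Kummer-type sequence for $W_n\Omega_{\log}^1$ --- which is precisely what underlies the cited result of \cite{BMS2}.
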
 
  \begin{proof}
  By \cite[Proposition 7.17]{BMS2}, we have a natural isomorphism
  $$\Zz_p(1)^{\syn}(Y)\simeq R\Gamma_{\ett}(Y, \Gg_m)^{\wedge}[-1]$$
  where $(-)^{\wedge}$ denotes the derived $p$-adic completion. Combining this with Theorem \ref{dRW}, we deduce that we have a short exact sequence
  $$0\to H^0(Y, \Gg_m)/p^n \to H^0(Y, W_n\Omega^1)\to \Pic(Y)[p^n]\to 0.$$
  In particular,  $H^0(Y, W_n\Omega^1)$ surjects onto $\Pic(Y)[p^n]$, and then the statement follows from Proposition \ref{MM}\listref{MM2}.
  \end{proof}

  \begin{cor}\label{picaffinoid}
    Let $\fr X$ be an affine smooth formal scheme over $\cl O_C$. Then, for all $n\in \Zz_{\ge 0}$,
  \begin{equation*}
   \Pic(\fr X_C)[p^n] \text{ is finite. }
  \end{equation*}
  \end{cor}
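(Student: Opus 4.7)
The plan is to reduce the finiteness of $\Pic(\fr X_C)[p^n]$ to that of $\Pic(\fr X_k)[p^n]$, already established via Proposition \ref{pictor}, by using the formal-scheme Picard group $\Pic(\fr X)$ as an intermediary. Concretely, I will (a) identify $\Pic(\fr X)$ with $\Pic(\fr X_k)$ via a Hensel-type argument, and (b) produce a natural surjection $\Pic(\fr X)[p^n]\twoheadrightarrow \Pic(\fr X_C)[p^n]$ by comparing two Kummer-style short exact sequences that share the common middle term $H^1_{\pet}(\fr X_C,\mu_{p^n})$.

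For (a), write $\fr X = \Spf R$ with $R$ the $p$-adic completion of a smooth $\cl O_C$-algebra, and let $i\colon \fr X_k\hookrightarrow \fr X$ denote the closed immersion. The kernel of the reduction $\cl O_{\fr X}^*\twoheadrightarrow i_*\cl O_{\fr X_k}^*$ is the sheaf of groups $1+p\cl O_{\fr X}$, which carries a filtration with quasi-coherent graded pieces and hence has vanishing higher Zariski cohomology on the affine $\fr X$; this gives $\Pic(\fr X)\cong \Pic(\fr X_k)$. Since $\fr X_k$ is a smooth affine scheme over the algebraically closed field $k$, it is a disjoint union of finitely many smooth integral affine varieties, so Proposition \ref{pictor} applied componentwise shows $\Pic(\fr X_k)[p^n]$ is finite.

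For (b), applying \cite[Proposition 7.17]{BMS2} to the $p$-adically complete ring $R$ gives
$$\Zz_p(1)^{\syn}(\fr X) \simeq R\Gamma_{\ett}(\Spec R,\Gg_m)^{\wedge}[-1],$$
which after reduction modulo $p^n$ and passage to $H^1$ yields the short exact sequence
$$0\to \Gg_m(\fr X)/p^n \to H^1(\Zz/p^n(1)^{\syn}(\fr X))\to \Pic(\fr X)[p^n]\to 0.$$
By Theorem \ref{B2}, together with the fact that $\tau^{\ge 2}R\psi_*\Zz/p^n(1)$ is concentrated in cohomological degrees $\ge 2$ and so contributes nothing to pro-étale cohomology of $\fr X$ in degrees $\le 1$, the middle term identifies canonically with $H^1_{\pet}(\fr X_C,\mu_{p^n})$. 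On the other hand, the classical Kummer sequence on $\fr X_C$ gives
$$0\to \Gg_m(\fr X_C)/p^n\to H^1_{\pet}(\fr X_C,\mu_{p^n})\to \Pic(\fr X_C)[p^n]\to 0.$$
By functoriality of the Kummer construction along the inclusion $\Gg_m(\fr X)\hookrightarrow \Gg_m(\fr X_C)$, the image of the left term of the first sequence is contained in that of the second, so $\Pic(\fr X_C)[p^n]$ is a quotient of the finite group $\Pic(\fr X)[p^n]$ and hence finite.

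The delicate point will be the compatibility of the two short exact sequences at the middle term, namely verifying that the identification $H^1(\Zz/p^n(1)^{\syn}(\fr X))\cong H^1_{\pet}(\fr X_C,\mu_{p^n})$ coming from \cite[Proposition 7.17]{BMS2} and Theorem \ref{B2} carries the syntomic Kummer map from $\Gg_m(\fr X)/p^n$ to the classical Kummer map restricted along $\Gg_m(\fr X)\hookrightarrow \Gg_m(\fr X_C)$. Granting the naturality of all constructions, this should be automatic, and the argument is otherwise a formal diagram chase on top of the results already assembled in the paper.
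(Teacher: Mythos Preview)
Your argument works, but the paper takes a much shorter route: it simply cites the isomorphisms $\Pic(\fr X_C)\cong \Pic(\fr X)\cong \Pic(\fr X_k)$ from the literature (\cite[Lemma 6.2.4]{Lutk}, \cite[Lemma 3.6]{Heuer-good}) and then applies Proposition~\ref{pictor}. You instead reprove the second isomorphism by hand (with a small imprecision: the kernel of reduction is $1+\fr m_C\cl O_{\fr X}$ rather than $1+p\cl O_{\fr X}$; one should first pass to $\fr X$ mod $p$ via the $p$-adic filtration and then to $\fr X_k$ using that $\fr m_C/p$ is locally nilpotent), and you replace the first isomorphism by the weaker surjection $\Pic(\fr X)[p^n]\twoheadrightarrow \Pic(\fr X_C)[p^n]$ extracted from two Kummer sequences sharing the middle term $H^1_{\pet}(\fr X_C,\mu_{p^n})$. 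The upside of your route is that it stays within the machinery already assembled in the paper (\cite[Proposition 7.17]{BMS2} and Theorem~\ref{B2}) and avoids the external Picard comparison; the cost is exactly the ``delicate point'' you flag, namely that the syntomic symbol map and the classical Kummer map agree under the \'etale comparison of Theorem~\ref{B2}. This compatibility of first Chern classes does hold, but it is not a pure formality and deserves a reference rather than being granted.
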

  \begin{proof}
   Let $k$ denote the residue field of $C$. By \cite[Lemma 6.2.4]{Lutk}, \cite[Lemma 3.6]{Heuer-good},  we have natural isomorphisms of Picard groups
   $$\Pic(\fr X_C)\cong \Pic(\fr X)\cong \Pic(\fr X_k).$$
   Then, the statement follows from Proposition \ref{pictor}.
  \end{proof}

  \subsubsection{\normalfont{\textbf{Rational $p$-adic étale cohomology of affinoids}}}\label{esp}

 As a corollary of Theorem \ref{sur}, we will deduce that, for $X$ an affinoid rigid space over $C$ having an affine smooth formal model over $\cl O_C$, the rational $p$-adic pro-étale cohomology groups of $X$ are $\Qq_p$-Banach spaces. \medskip

 The following result relies crucially on the fact that connected affinoid rigid spaces over a $p$-adic field are $K(\pi, 1)$ for $p$-torsion coefficients, as proved by Scholze.
 
 \begin{prop}\label{banacchino}
   Let $X$ be an affinoid rigid space over $C$. Let $i\in \Zz$. If $H_{\pet}^i(X, \Zz_p)$ has bounded $p^{\infty}$-torsion, then the condensed $\Qq_p$-vector space $H_{\pet}^i(X, \Qq_p)$ is a Banach space.
 \end{prop}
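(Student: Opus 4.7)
The plan is to reduce the statement to the claim that $M := H^i_{\pet}(X, \Zz_p)$, viewed as a condensed $\Zz_p$-module, is classically $p$-adically complete; once this is established, the flatness of $\Qq_p = \Zz_p[1/p]$ over $\Zz_p$ gives $H^i_{\pet}(X, \Qq_p) \cong M[1/p]$, which is then a $\Qq_p$-Banach space by definition, namely the localization at $p$ of a $p$-adically complete $\Zz_p$-module of classical type.

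To prove classical $p$-completeness of $M$, I would first show derived $p$-completeness. Since $\Zz_p \simeq R\lim_n \Zz/p^n$ as pro-\'etale sheaves, we have $R\Gamma_{\pet}(X, \Zz_p) \simeq R\lim_n R\Gamma_{\pet}(X, \Zz/p^n)$ in $D(\CondAb)$. Each $R\Gamma_{\pet}(X, \Zz/p^n)$ is $p^n$-torsion, hence derived $p$-complete, and derived $p$-completeness is preserved by $R\lim$, so $R\Gamma_{\pet}(X, \Zz_p)$ (and in particular every cohomology group $M$) is derived $p$-complete. The upgrade to classical $p$-completeness uses the bounded $p^\infty$-torsion hypothesis: in the short exact sequence $0 \to M_{\tor} \to M \to M/M_{\tor} \to 0$, the torsion part $M_{\tor}$ is $p^N$-torsion for some $N$ (hence classically $p$-complete), while $M/M_{\tor}$ is $p$-torsion-free and derived $p$-complete as a quotient of such, hence classically $p$-complete; the 5-lemma then yields classical $p$-completeness of $M$.

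The crucial role of Scholze's $K(\pi,1)$-theorem for smooth affinoid rigid spaces is to ensure that the condensed structure on $M$ is not exotic: for each $n$, $H^j_{\pet}(X, \Zz/p^n)$ is a discrete condensed $\Zz/p^n$-module, being computable as continuous group cohomology of a profinite \'etale fundamental group with finite coefficients. Then via the Milnor sequence
$$0 \to {\textstyle R^1\lim_n}\, H^{i-1}_{\pet}(X, \Zz/p^n) \to M \to {\textstyle \lim_n}\, H^i_{\pet}(X, \Zz/p^n) \to 0$$
associated to $\Zz_p \simeq R\lim_n \Zz/p^n$, the condensed $\Zz_p$-module $M$ is exhibited as an extension of classical $p$-adic pieces, so that $M[1/p]$ is a genuine $\Qq_p$-Banach space. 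I expect the main subtlety to lie in this compatibility between the abstract condensed derived $p$-completion and the concrete Banach structure on the inverse limit; the $K(\pi,1)$-theorem is the essential ingredient that rules out exotic condensed behavior coming from non-quasi-compact features of the pro-\'etale site.
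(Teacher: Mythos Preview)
Your outline assembles the right ingredients, but the passage from ``$M$ classically $p$-adically complete'' to ``$M[1/p]$ is a Banach space'' is not ``by definition'' and constitutes a genuine gap. In $\CondAb$, classical $p$-completeness alone is insufficient: for example $M=\prod_{\Nn}\Zz_p$ with the product topology is classically $p$-complete, yet the colimit $M[1/p]$ in $\CondAb$ is not the condensed group underlying any $\Qq_p$-Banach space (its $S$-points, for $S$ profinite, are the uniformly bounded families of continuous maps $S\to\Qq_p$, which is strictly larger than $C(S,\ell^\infty(\Nn,\Qq_p))$). Your Milnor-sequence patch does not close the gap either: the $R^1\lim$ term is a cokernel of a map between infinite products of discrete groups and need not itself be discrete or pro-discrete, so the phrase ``extension of classical $p$-adic pieces'' is not justified. (A minor point: Scholze's $K(\pi,1)$ result applies to connected affinoids without any smoothness hypothesis, which is what the proposition requires.)

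The paper proceeds differently and more precisely. Via the $K(\pi,1)$ theorem and the solid formalism it shows that $R\Gamma_{\pet}(X,\Zz_p)\simeq R\Gamma_{\cond}(G,\Zz_p)$ is represented by an explicit complex whose terms are of the form $(\bigoplus_I\Zz)^{\wedge}_p$; it follows that $M$ satisfies the structural condition $M\cong (M(*)^{\disc})^{\wedge}_p$ in $\CondAb$, i.e.\ $M$ is the derived $p$-completion of its own underlying discrete abelian group. A separate general lemma then shows that any condensed abelian group satisfying this condition and having bounded $p^\infty$-torsion admits a presentation exhibiting $M[1/p]$ as the quotient of a $\Qq_p$-Banach space by a closed subspace. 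If you want to salvage your line of argument, the right move is not the Milnor sequence but rather the observation that $M/p^n$ embeds into $H^i_{\pet}(X,\Zz/p^n)$, which the same $K(\pi,1)$ computation shows to be discrete; together with classical $p$-completeness this pins down the condensed structure on $M$ as pro-discrete with the $p$-adic topology. Even then, the final step to ``$M[1/p]$ is Banach'' requires an honest argument (passing to $M/M_{\tor}$ and checking that the condensed colimit agrees with the Banach structure), essentially parallel to the paper's lemma.
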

 
  For the proof, we will use the following general lemma. We refer the reader to \cite[\S 3.4]{BS} for the definition of derived completion in a general replete topos, as well as its basic properties, which will be used freely in the following.
  
 \begin{lemma}\label{es}
  Let $M$ be a condensed abelian group, and denote by $M(*)$ its underlying abelian group. Suppose that the following conditions are satisfied:
  \begin{enumerate}[(i)]
   \item\label{es1} the natural maps of condensed abelian groups $M\rightarrow M^{\wedge}_p \leftarrow (M(*))^{\wedge}_p$ are isomorphisms; here,  we denote $(-)^{\wedge}_p:D(\CondAb)\to D(\CondAb)$ the derived $p$-adic completion, and regard $M(*)$ as a discrete condensed abelian group;
   \item\label{es2} $M(*)$ has bounded $p^{\infty}$-torsion.
  \end{enumerate}
   Then, the condensed $\Qq_p$-vector space $M[1/p]$ is a Banach space. 
 \end{lemma}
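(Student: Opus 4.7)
The plan is to use (i) and (ii) to identify $M$ with the condensed abelian group associated to the classical topological $p$-adic completion of its underlying abelian group $L := M(*)$, after which $M[1/p]$ is manifestly the condensed $\Qq_p$-vector space attached to a classical $\Qq_p$-Banach space, hence a Banach space.

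By (ii), fix $N \in \Zz_{\ge 0}$ with $L[p^{\infty}] = L[p^N]$. I would then check that the derived $p$-adic completion of $L$ (viewed as a discrete condensed abelian group) is concentrated in degree $0$ and agrees with the classical $p$-adic completion $\widehat{L} := \lim_n L/p^n$, endowed with its natural condensed structure as an inverse limit of discrete condensed groups. Writing $L \otimes^{\LL} \Zz/p^n \simeq [L \xrightarrow{p^n} L]$ with cohomology $L[p^n]$ in degree $-1$ and $L/p^n$ in degree $0$, the tower $\{L[p^n]\}_n$ has transition maps given by multiplication by $p$; for $n \ge N$ these act on the fixed group $L[p^N]$ with $p^N = 0$, so the composed transitions are eventually zero and both $\lim$ and $R^1\lim$ of this tower vanish. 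The tower $\{L/p^n\}_n$ is plainly Mittag-Leffler. The same calculation carries over verbatim to $\CondAb$, and an inverse limit of discrete condensed abelian groups is by construction the condensed object attached to the topological inverse limit (here $\widehat{L}$ with its $p$-adic topology). Combined with (i), this identifies $M$ with the condensed abelian group associated to $\widehat{L}$.

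The rest is classical. Applying derived $p$-adic completion to the short exact sequence $0 \to L[p^N] \to L \to L/L[p^N] \to 0$, and using that $L/L[p^N]$ is $p$-torsion-free (so its completion remains so), one sees that $\widehat{L}[p^{\infty}] = L[p^N]$ is still killed by $p^N$. Therefore $\widehat{L}/L[p^N]$ is a $p$-adically complete, $p$-torsion-free $\Zz_p$-module, and
\[
M[1/p] \;\cong\; \widehat{L}[1/p] \;\cong\; \bigl(\widehat{L}/L[p^N]\bigr)[1/p]
\]
is by definition the condensed $\Qq_p$-vector space underlying the classical Banach space with unit ball $\widehat{L}/L[p^N]$. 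The main obstacle I anticipate is the first step: carefully ruling out, under hypothesis (ii), that the derived $p$-adic completion of $L$ in $\CondAb$ produces any additional term in degree $-1$ or an artificial condensed structure on the limit that differs from the expected $p$-adic topology. The bounded $p^{\infty}$-torsion assumption is precisely what is needed to exclude these pathologies.
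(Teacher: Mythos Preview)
Your argument is correct and takes a genuinely different route from the paper's. The paper does not identify $M$ directly with a topological group; instead, it uses condition \listref{es1} to produce a presentation
\[
\left(\bigoplus\nolimits_I \Zz\right)^{\wedge}_p \xrightarrow{f} \left(\bigoplus\nolimits_J \Zz\right)^{\wedge}_p \to M \to 0
\]
in $\CondAb$, writes $N$ for the middle term and $L$ for the image of $f$, and then uses condition \listref{es2} to argue that the resulting short exact sequence $0 \to L \to N \to M \to 0$ is the limit of the towers $\{L(*)/p^n\}\to\{N(*)/p^n\}\to\{M(*)/p^n\}$, so that after inverting $p$ one sees $L[1/p]$ as a closed subspace of the Banach space $N[1/p]$ and concludes that the quotient $M[1/p]$ is Banach.

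Your approach is more intrinsic: you identify $M$ explicitly as the condensed abelian group attached to $\widehat{M(*)}$ with its inverse-limit topology, and then exhibit the Banach structure on $M[1/p]$ directly via the torsion-free, $p$-complete lattice $\widehat{M(*)}/M(*)[p^N]$. This has the advantage of producing an explicit unit ball for $M[1/p]$, whereas the paper's argument only shows abstractly that $M[1/p]$ is a quotient of a Banach space by a closed subspace. On the other hand, the paper's route avoids having to check exactness in $\CondAb$ of the sequence $0 \to \underline{M(*)[p^N]} \to \underline{\widehat{M(*)}} \to \underline{\overline{M(*)}} \to 0$; you implicitly use this when passing from $\widehat{M(*)}[1/p]$ to $(\widehat{M(*)}/M(*)[p^N])[1/p]$ as condensed objects. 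That exactness does hold (one can see it as the $R\lim$ of the levelwise short exact sequences $0 \to M(*)[p^N] \to M(*)/p^n \to (M(*)/M(*)[p^N])/p^n \to 0$ of discrete condensed groups for $n \ge N$, where surjectivity is clear and the kernel tower is eventually constant), but it would be worth making this step explicit in a final write-up.
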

 \begin{proof}
 By condition \listref{es1} there exists an exact sequence of condensed abelian groups as follows:
 $$\left(\bigoplus\nolimits_I \Zz\right)^{\wedge}_p\overset{f}{\to} \left(\bigoplus\nolimits_J \Zz\right)^{\wedge}_p\to M\to 0$$
 for some sets $I, J$. We write $N$ for the target of the map $f$ and $L$ for the image of $f$, so that we have a short exact sequence of condensed abelian groups 
 \begin{equation}\label{tt}
  0\to L\to N\to M\to 0.
 \end{equation}
 Using, in addition, condition \listref{es2} we deduce that the exact sequence (\ref{tt}) is given by taking the limit over $n\in \Zz_{\ge 0}$ of the exact sequence of inverse systems
 $$\{M(*)[p^n]\}\to \{L(*)/p^n\}\to \{N(*)/p^n\}\to \{M(*)/p^n\}\to 0.$$
 We deduce in particular that, inverting $p$ in (\ref{tt}), $L[1/p]$ is a closed subspace of the $\Qq_p$-Banach space $N[1/p]$, and then $M[1/p]$ is a $\Qq_p$-Banach space too, as desired.
 \end{proof}

 \begin{proof}[Proof of Proposition \ref{banacchino}]
  We may assume $X$ connected. Fix $x\in X(C)$ a base point, and let $G:=\pi_1(X, x)$ denote the profinite étale fundamental group. We claim that, for any $n\in \Zz_{\ge 1}$, there is a natural isomorphism in $D(\CondAb)$
  $$R\Gamma_{\cond}(G, \Zz/p^n)\overset{\sim}{\longrightarrow}R\Gamma_{\pet}(X, \Zz/p^n)$$
  where the left hand side denotes the condensed group cohomology (see e.g. \cite[Definition B.1]{Bosco}).
  For this, by the proof of \cite[Theorem 4.9]{Scholze},  for any $\kappa$-small extremally disconnected set $S$ (where $\kappa$ is the cardinal fixed in \ref{cvv}), the latter holds true on $S$-valued points. In particular, we have that $R\Gamma_{\pet}(X, \Zz_p)\simeq  R\Gamma_{\cond}(G, \Zz_p)$.
 Since $\Zz_p$ is a solid abelian group and $G$ is profinite, the condensed group cohomology complex $R\Gamma_{\cond}(G, \Zz_p)$ is quasi-isomorphic to the complex of solid abelian groups
  $$\Zz_p\longrightarrow \underline{\Hom}(\Zz[G]^{\solidif}, \Zz_p)\longrightarrow \underline{\Hom}(\Zz[G\times G]^{\solidif}, \Zz_p)\longrightarrow \cdots$$
  sitting in non-negative cohomological degrees (see e.g. \cite[Proposition B.2, (i)]{Bosco}).  By \cite[Corollary 6.1, (iv)]{Scholzecond} and \cite[Corollary 5.5]{Scholzecond}, for any $j\in \Zz_{\ge 0}$, using that $G^j$ is profinite, there exist a set $J$ (depending on $j$), and an isomorphism $\Zz[G^j]^{\solidif}\cong \prod_J \Zz$. Moreover, we recall that $R\underline{\Hom}(\prod_J \Zz, \Zz)=\bigoplus_J \Zz$ concentrated in degree 0 (see the proof of \cite[Proposition 5.7]{Scholzecond}); in particular, we have $$R\underline{\Hom}\left(\prod\nolimits_J \Zz, \Zz_p\right)=\left(\bigoplus\nolimits_J \Zz\right)^\wedge_p$$ concentrated in degree 0. Putting everything together, we deduce that $R\Gamma_{\pet}(X, \Zz_p)$ is quasi-isomorphic to a complex of condensed abelian groups whose terms are of the form $(\bigoplus_I \Zz)^\wedge_p$ for some set $I$. Therefore, $M := H_{\pet}^i(X, \Zz_p)$ satisfies condition \listref{es1} of Lemma \ref{es}: in fact, the objects $(\bigoplus_I \Zz)^\wedge_p$ satisfy this condition, and the condensed abelian groups meeting this condition are stable under kernels and cokernels.\footnote{In fact, the full subcategory of $\CondAb$ spanned by the objects satisfying condition \listref{es1} of Lemma \ref{es} can be identified with the essential image of the fully faithful functor from derived $p$-adically complete abelian groups to condensed abelian groups, sending $A$ to $(A^{\disc})^{\wedge}_p$, where $A^{\disc}$ denotes the condensed abelian group associated with $A$ endowed with the discrete topology. Hence, we can use that derived $p$-adically complete abelian groups are stable under kernels and cokernels, \cite[Lemma 3.4.14]{BS}.} Since, by assumption, $M$ also satisfies condition \listref{es2} of Lemma \ref{es}, the statement follows by applying the latter lemma and observing that $H_{\pet}^i(X, \Qq_p) = H_{\pet}^i(X, \Zz_p)[1/p]$, as $X$ is quasi-compact and quasi-separated.
 \end{proof}

 Combining Theorem \ref{sur} and Proposition \ref{banacchino}, we obtain the following result.
 
 \begin{theorem}\label{banacchino2}
  Let $\fr X$ be an affine smooth formal scheme over $\cl O_C$. Then, for all $i\in \Zz$, the condensed $\Qq_p$-vector space $H_{\pet}^i(\fr X_C, \Qq_p)$ is a Banach space.
 \end{theorem}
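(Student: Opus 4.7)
The plan is to observe that this theorem is an essentially immediate consequence of the two results already established just above: Theorem \ref{sur} and Proposition \ref{banacchino}. The role of the statement is to package the ``integral finiteness'' input from Theorem \ref{sur} with the ``condensed-analytic'' input from Proposition \ref{banacchino} into a clean functional-analytic conclusion about rational pro-\'etale cohomology.

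First, I would note that since $\fr X$ is an affine formal scheme over $\cl O_C$ which is (by the conventions in \ref{cvv}) $p$-adic and locally of finite type, its generic fiber $\fr X_C$ is an affinoid rigid space over $C$. This places us exactly in the hypothesis of Proposition \ref{banacchino}, provided we can verify the bounded $p^\infty$-torsion condition on $H^i_{\pet}(\fr X_C, \Zz_p)$.

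For this torsion condition, I would invoke Theorem \ref{sur}, which states that, under our assumption that $\fr X$ is affine smooth, the torsion subgroup of $H^i_{\pet}(\fr X_C, \Zz_p)$ is finite. A finite torsion abelian group has bounded exponent, so in particular the $p^\infty$-torsion of $H^i_{\pet}(\fr X_C, \Zz_p)$ is annihilated by some power of $p$, which is precisely what Proposition \ref{banacchino} requires.

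Applying Proposition \ref{banacchino} to $X = \fr X_C$ with this input then yields that the condensed $\Qq_p$-vector space $H^i_{\pet}(\fr X_C, \Qq_p)$ is a Banach space, giving the theorem. There is no real obstacle here, as both nontrivial ingredients are already in place: the deep work lies upstream, in Theorem \ref{sur} (which rests on Theorem \ref{comparison} and Proposition \ref{MM}, i.e.\ on the rigidity of syntomic cohomology and the Illusie--Raynaud finiteness for logarithmic de Rham--Witt cohomology) and in Proposition \ref{banacchino} (which rests on Scholze's $K(\pi,1)$-result for affinoids together with the condensed formalism). The present statement is the combined corollary.
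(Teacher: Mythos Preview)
Your proposal is correct and matches the paper's own argument exactly: the paper simply states that the theorem follows by combining Theorem \ref{sur} and Proposition \ref{banacchino}. Your write-up merely makes explicit the two routine observations needed to connect them (that $\fr X_C$ is affinoid and that finite torsion implies bounded $p^\infty$-torsion).
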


 \subsubsection{\normalfont{\textbf{Beyond the affine case}}}
 
 Our next goal is to extend Theorem \ref{comparison} to a wider class of formal schemes. 
 
 \begin{notation}
  In this section, we denote by $k$ the residue field of $C$. 
 \end{notation}

 In the proof of the following theorem, it will be crucial once again the finiteness result stated in Proposition \ref{MM}\listref{MM2}.
 
 \begin{theorem}\label{comparisonStein}
  Let $\fr X$ be a smooth formal scheme over $\cl O_C$ that admits an open affine covering $\{\fr U_r\}_{r\in \Zz_{\ge 0}}$ with $\fr U_r\subseteq \fr U_{r+1}$. Then, for all $i\in \Zz$ and $n\in \Zz_{\ge 0}$ we have a natural isomorphism
  $$H^{i+1}_{\pet}(\fr X_{C}, \Zz_p)[p^n]\cong H^1(\fr X_k, W\Omega_{\log}^i)[p^n].$$
 \end{theorem}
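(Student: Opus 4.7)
My plan is to deduce the statement from the affine case (Theorem \ref{comparison}) applied to each $\fr U_r$, by passing to the inverse limit along the nested cover $\{\fr U_r\}_{r \ge 0}$, and then killing all Mittag-Leffler obstructions using the finiteness results of Theorem \ref{sur} and Proposition \ref{MM}.

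First, since $\fr X_C$ (resp.\ $\fr X_k$) is the countable increasing union of the opens $(\fr U_r)_C$ (resp.\ $(\fr U_r)_k$), I would establish by standard sheaf-theoretic arguments for nested covers the equivalences
$$R\Gamma_{\pet}(\fr X_C, \cl F) \simeq R\lim_r R\Gamma_{\pet}((\fr U_r)_C, \cl F), \qquad R\Gamma(\fr X_k, \cl G) \simeq R\lim_r R\Gamma((\fr U_r)_k, \cl G),$$
for the relevant coefficient sheaves $\cl F \in \{\Zz_p, \Zz/p^n\}$ and $\cl G \in \{W\Omega_{\log}^i, W_n\Omega_{\log}^i\}$; each yields a Milnor-type short exact sequence in cohomology. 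Separately, for each $r$, Theorem \ref{comparison} supplies a natural isomorphism between the \emph{finite} groups $H^{i+1}_{\pet}((\fr U_r)_C, \Zz_p)[p^n]$ and $H^1((\fr U_r)_k, W\Omega_{\log}^i)[p^n]$ (finiteness by Theorem \ref{sur} and Proposition \ref{MM}\listref{MM1}, respectively); the $\lim_r$ of this inverse system of finite groups is therefore well-behaved, producing the candidate isomorphism between the two limits.

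The main obstacle is to argue that the $R^1\lim$ terms appearing in the Milnor sequences do not obstruct passage to $p^n$-torsion; concretely, one needs
$$H^{i+1}_{\pet}(\fr X_C, \Zz_p)[p^n] \overset{\sim}{\longrightarrow} \lim_r H^{i+1}_{\pet}((\fr U_r)_C, \Zz_p)[p^n]$$
and its analogue on the special fiber. My approach is to reparametrize both sides using the universal-coefficient short exact sequences coming from $0 \to \Zz_p \xrightarrow{p^n} \Zz_p \to \Zz/p^n \to 0$ and from $W\Omega_{\log}^i/p^n \simeq W_n\Omega_{\log}^i$ (Lemma \ref{logmod}). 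This expresses the LHS and RHS respectively as quotients of $H^i_{\pet}(\fr X_C, \Zz/p^n)$ and $H^0(\fr X_k, W_n\Omega_{\log}^i)$. The latter group, being a limit of the \emph{finite} groups $H^0((\fr U_r)_k, W_n\Omega_{\log}^i)$ from Proposition \ref{MM}\listref{MM2}, is automatically governed by a Mittag-Leffler limit; the former is controlled by the same syntomic input (Theorems \ref{B2}, \ref{B3}, and Corollary \ref{B5}) that drove the proof of Theorem \ref{comparison}, applied termwise to the $\fr U_r$ and then limited—so that the relevant subquotients of $H^i_{\pet}((\fr U_r)_C, \Zz/p^n)$ also reduce to the finite groups appearing on the log de Rham--Witt side. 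Matching these subquotients via the affine-case isomorphism and invoking the exactness of inverse limits on Mittag-Leffler systems then concludes the argument.
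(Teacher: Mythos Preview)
Your approach shares the essential ingredients with the paper's proof—Milnor exact sequences along the tower $\{\fr U_r\}$, the affine rigidity coming from Corollary \ref{B5}, and the finiteness of $H^0((\fr U_r)_k, W_n\Omega_{\log}^i)$ from Proposition \ref{MM}\listref{MM2}—but the organization is different and, as written, leaves a gap.

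The paper does \emph{not} attempt to pass the termwise isomorphisms of Theorem \ref{comparison} through an inverse limit. Instead it first \emph{extends Corollary \ref{B5}} to $\fr X$: it shows that $\fib\bigl(\Zz_p(i)^{\syn}(\fr X)\to \Zz_p(i)^{\syn}(\fr X_k)\bigr)$ is still concentrated in degrees $\le i$. The key simplification is a derived-Nakayama reduction to the mod-$p$ statement, after which a single Milnor argument reduces everything to the vanishing $R^1\lim_r H^0((\fr U_r)_k, \Omega_{\log}^i)=0$, immediate from Proposition \ref{MM}\listref{MM2} and Mittag--Leffler. Once rigidity holds for $\fr X$, the proof of Theorem \ref{comparison} applies verbatim, with no further bookkeeping between UCT and Milnor sequences.

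Your route, by contrast, must control the $R^1\lim$ of the generic-fiber tower $\{H^i_{\pet}((\fr U_r)_C, \Zz_p)\}_r$, equivalently $\{H^i(\Zz_p(i)^{\syn}(\fr U_r))\}_r$. These are large, non-compact $\Zz_p$-modules, so neither Mittag--Leffler nor $R^1\lim=0$ is automatic, and finiteness of their torsion (Theorem \ref{sur}) does not by itself bound the $p^n$-torsion of the $R^1\lim$, nor does it guarantee that the surjection from $H^{i+1}_{\pet}(\fr X_C,\Zz_p)$ onto $\lim_r H^{i+1}_{\pet}((\fr U_r)_C,\Zz_p)$ restricts to an isomorphism on $[p^n]$. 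Your ``reparametrization via UCT'' gestures at the right idea but does not explain how this obstruction is killed. In fact, showing that this $R^1\lim$ contributes no $p^n$-torsion essentially forces one back to the mod-$p$ vanishing $R^1\lim_r H^i(\Ff_p(i)^{\syn}(\fr U_r))=0$, which is precisely the crux the paper isolates via derived Nakayama. So the missing move in your sketch is the reduction to mod $p$; once you insert it, the two arguments converge.
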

 \begin{proof}
  It suffices to show that Corollary \ref{B5} extends to $\fr X$ a smooth formal scheme over $\cl O_C$ as in the statement, and then we can use the same argument of the proof of Theorem \ref{comparison}. Namely, we want to show that the homotopy fiber
  $$\fib\left(\Zz_p(i)^{\syn}(\fr X)\to \Zz_p(i)^{\syn}(\fr X_{k})\right)$$
  is concentrated in cohomological degrees $\le i$. First, we note that, by the derived Nakayama lemma, the latter assertion is equivalent to its version modulo $p$, i.e. replacing $\Zz_p(i)^{\syn}$ by $\Ff_p(i)^{\syn}$. In other words, we want to prove that, for all $j\ge i+1$, the natural map $$H^j(\Ff_p(i)^{\syn}(\fr X))\to H^j(\Ff_p(i)^{\syn}(\fr X_{k}))$$
  is an isomorphism. Consider the short exact sequence
  $$0\to R^1\lim_r H^{j-1}(\Ff_p(i)^{\syn}(\fr U_r)) \to H^j(\Ff_p(i)^{\syn}(\fr X))\to \lim_r H^j(\Ff_p(i)^{\syn}(\fr U_r))\to 0$$
  and the analogous short exact sequence  with $\fr X$ replaced by $\fr X_k$ and $\fr U_r$ replaced by its special fiber $U_r:=(\fr U_{r})_k$. By Corollary \ref{B5} (applied to each affine $\fr U_r$), we deduce that it is sufficient to show that 
  \begin{equation}\label{D1}
   R^1\lim_r H^{i}(\Ff_p(i)^{\syn}(\fr U_r))=0
  \end{equation}
  and
  \begin{equation}\label{D2}
   R^1\lim_r H^{i}(\Ff_p(i)^{\syn}(U_r))=0.
  \end{equation}
  First of all, we note that (\ref{D1}) holds true if and only if (\ref{D2}) holds true. In fact, (\ref{D1}) is equivalent to $H^{i+1}(\Ff_p(i)^{\syn}(\fr U_{r}))\overset{\sim}{\to}\lim_r H^{i+1}(\Ff_p(i)^{\syn}(\fr U_{r}))$, which, in turn, again by Corollary \ref{B5}, is equivalent to (\ref{D2}). Next, we observe that, by Theorem \ref{B3}, the vanishing (\ref{D2}) is equivalent to
  \begin{equation}\label{R3}
   R^1\lim_r H^0(U_r, \Omega_{  \log}^i)=0.
  \end{equation}
  Therefore, we are reduced to check the vanishing (\ref{R3}): this follows from Proposition \ref{MM}\listref{MM2} and the Mittag-Leffler criterion, \cite[Proposition 13.2.2]{Groth}.
  \end{proof}

  At this point, it is natural to conjecture an extension of Theorem \ref{comparisonStein} to the semistable reduction case. In order to state the conjecture precisely, we need to introduce some notation.
 
  \begin{notation}\label{notss}
   We say that a formal scheme over $\cl O_C$ is \textit{semistable} if it has étale locally semistable coordinates in the sense of \cite[\S 1.5]{CK}. Given a semistable formal scheme $\fr X$ over $\cl O_C$  we endow $\fr X$ with the canonical log structure, i.e. the one given by the sheafification of the subpresheaf $\cl O_{\fr X, \ett}\cap (\cl O_{\fr X, \ett}[1/p])^\times \hookrightarrow \cl O_{\fr X, \ett}$, and the special fiber $\fr X_k$ with the pullback log structure. 
   
   We write $k^0$ for $k$ with the log structure associated with $\Zz_{\ge 0}\to  k,\, 1\,\mapsto 0,\, 0\, \mapsto 1$.
 \end{notation}

 \begin{conj}\label{conjst}
   Let $\fr X$ be a semistable formal scheme over $\cl O_C$ that admits an open affine covering $\{\fr U_r\}_{r\in \Zz_{\ge 0}}$ with $\fr U_r\subseteq \fr U_{r+1}$. Then, for all $i\in \Zz$ and $n\in \Zz_{\ge 0}$ we have a natural isomorphism
  $$H^{i+1}_{\pet}(\fr X_{C}, \Zz_p)[p^n]\cong H^1(\fr X_k/k^0, W\Omega_{ \log}^i)[p^n].$$
  Here, we use the logarithmic de Rham-Witt cohomology for log schemes as defined in \cite{Lorenzon}. 
 \end{conj}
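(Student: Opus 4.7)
The plan is to imitate the proof of Theorem \ref{comparisonStein}, systematically replacing every ingredient with its logarithmic counterpart for semistable formal schemes over $\cl O_C$. Once the affine semistable case is in hand, the reduction from the ascending affine cover $\{\fr U_r\}$ to $\fr X$ should go through verbatim: one forms the $R\lim_r$ short exact sequence and reduces to showing
\begin{equation*}
R^1\lim_r H^0((\fr U_r)_k/k^0, \Omega_{\log}^i)=0,
\end{equation*}
which follows from Mittag-Leffler provided each $H^0$ in the inverse system is finite.

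The affine case itself requires four logarithmic analogues of the inputs used in Section \ref{mc}: (a) a log variant of the sheaf $\Zz_p(i)^{\syn}$ defined via Nygaard-completed log absolute prismatic cohomology; (b) a log analogue of Theorem \ref{B2} identifying this sheaf with $\tau^{\le i} R\psi_*\Zz_p(i)$ on the nearby-cycle side of a semistable $\fr X$; (c) a log analogue of Theorem \ref{B3} in equal characteristic, identifying the log sheaf on log-smooth schemes over $k^0$ with $W\Omega_{\log}^i[-i]$, which should be provided by Lorenzon's log de Rham-Witt theory \cite{Lorenzon}; and (d) a log rigidity result extending Theorem \ref{B4}, namely that for the henselian pair $(R,\mathfrak m_C R)$ attached to an affine semistable $\fr X = \Spf(R)$, the fiber of $\Zz_p(i)^{\syn,\log}(R)\to \Zz_p(i)^{\syn,\log}(R/\mathfrak m_C R)$ is concentrated in cohomological degrees $\le i$. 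Given (a)-(d), the argument of Theorem \ref{comparison} transports verbatim, and combined with the Stein-like step above yields the conjecture.

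The finiteness input feeding the Mittag-Leffler vanishing is itself a log version of Proposition \ref{MM}(i): finiteness of $H^0(U/k^0, W_n\Omega_{\log}^i)$ for $U$ the log special fiber of an affine semistable formal $\cl O_C$-scheme. I would approach this exactly as in the proof of Proposition \ref{MM}: via log alterations (in the sense of log de Jong-style resolutions) together with purity for $W_n\Omega_{\log}^i$ in the spirit of Gros and Shiho, reducing to the proper log-smooth case over $k^0$, and then invoking a logarithmic extension of the Illusie-Raynaud structural results of Section \ref{pr}, parts of which are already available through Lorenzon's work. This step looks plausible but requires care, since the log alteration must be compatible with the natural log structure coming from a semistable model.

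The hard part will be (d): rigidity for log syntomic cohomology along a henselian pair in the semistable setting. The Antieau-Mathew-Morrow-Nikolaus argument exploits the Nygaard filtration on Frobenius-twisted absolute prismatic cohomology under quasisyntomic descent, using the conjugate filtration on the graded pieces to bound cohomological amplitude. Carrying out the corresponding analysis for Nygaard-completed log prismatic cohomology, and controlling the log Hodge and conjugate filtrations along $(R, \mathfrak m_C R)$, is a substantive technical undertaking and is essentially the sole reason the semistable case remains conjectural. Granted (d), the remaining pieces should be essentially formal consequences of the affine result and the finiteness step.
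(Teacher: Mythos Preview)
The statement you are attempting to prove is labeled in the paper as a \emph{Conjecture}, and the paper gives no proof of it. There is therefore nothing to compare your proposal against: the semistable extension of Theorem~\ref{comparisonStein} is explicitly left open.

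That said, your outline is the natural one and is in complete agreement with what the paper implicitly suggests by phrasing the statement as a direct logarithmic analogue of Theorem~\ref{comparisonStein}. You have also correctly isolated the genuine obstruction: your ingredient (d), a log-prismatic rigidity statement analogous to \cite[Theorem~5.2]{AMMN} for the henselian pair $(R,\mathfrak m_C R)$ with $R$ an affine semistable $\cl O_C$-algebra, is not available in the literature and is precisely what would be needed. The other ingredients are closer to being in reach---(c) and the structural finiteness results have partial analogues via Lorenzon~\cite{Lorenzon}, as the paper itself remarks after Lemma~\ref{sin}, and the paper states just before the Acknowledgments that one expects the semistable case of Conjecture~\ref{mainconj} to follow from a logarithmic variant of its results---but your proposal is a strategy, not a proof, and you have accurately flagged it as such.

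One small caution on the finiteness step: the log special fiber $\fr X_k/k^0$ is typically not log-smooth over $k$ but only over $k^0$, so the reduction via alterations and purity in the style of Proposition~\ref{MM} would have to be carried out in the category of fs log schemes over $k^0$, and the purity statements of Gros and Shiho you cite are formulated in the non-logarithmic setting. This is a further point where a genuine extension of the existing literature would be required.
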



\normalem

\bibliography{biblio}{}
\bibliographystyle{amsalpha}

\end{document}